\newcommand{\R}{\mathbb{R}}
\newcommand{\Z}{\mathbb{Z}}
\newcommand{\N}{\mathbb{N}}
\theoremstyle{plain}
\newtheorem{theorem}{Theorem}[section]
\newtheorem{lemma}[theorem]{Lemma}
\newtheorem{proposition}[theorem]{Proposition}
\theoremstyle{definition}
\newtheorem{definition}[theorem]{Definition}
\newcommand{\rt}[1]{\textrm{#1}}
\newcommand{\schw}[1]{\mathcal{S}(\mathbb{R}^{#1})}
\newcommand{\modelcurves}[1]{\mathfrak{S}(#1)}
\newcommand{\supp}[1]{\mathrm{supp} \ #1}
\newcommand{\spsupp}[2]{\mathrm{supp}_{#1} \ #2}
\newcommand{\cexp}{2}
\newcommand{\vecr}{\delta}
\newcommand{\vecrr}{\mathbf{r}}
\newcommand{\ar}{a_{\vecr}}
\newcommand{\arr}{a_{\vecrr}}
\newcommand{\ezero}{\varepsilon_{0}}
\newcommand{\eone}{\varepsilon_{1}}
\newcommand{\dxi}{\mathrm{d}\xi}
\newcommand{\dy}{\mathrm{d}y}
\newcommand{\dt}{\mathrm{d}t}
\newcommand{\ds}{\mathrm{d}s}
\newcommand{\ft}[2]{\mathcal{F}_{#1}(#2)}
\newcommand{\ift}[2]{\mathcal{F}_{#1}^{-1}(#2)}
\newcommand{\firstavg}{\mathcal{A}_{\vecr}^{\gamma}}
\newcommand{\maxfn}{\mathcal{N}_{\vecr}^{\gamma}}
\newcommand{\avg}[2]{\mathcal{A}[#1,#2]}
\newcommand{\tavg}[2]{\tilde{\mathcal{A}}[#1,#2]}
\newcommand{\avga}[2]{\mathcal{A}_{\textrm{main}}[#1,#2]}
\newcommand{\avgb}[2]{\mathcal{A}_{\textrm{err}}[#1,#2]}
\newcommand{\fravg}[2]{\mathfrak{D}_s\mathcal{A}[#1,#2]}
\newcommand{\inn}[2]{\langle #1, #2 \rangle}
\newcommand{\norm}[1]{\left \lVert #1 \right \rVert}
\title[A Nikodym maximal function associated to space curves]{$L^2$ estimates for a Nikodym maximal function associated to space curves}
\author{Aswin Govindan Sheri}
\address{School of Mathematics, James Clerk Maxwell Building, The King's Buildings, Peter Guthrie Tait Road, Edinburgh, EH9 3FD, UK.}
\email{a.govindan-sheri@sms.ed.ac.uk}
\date{\today}
\begin{document}

\maketitle


\begin{abstract}
We consider the $L^p \rightarrow L^p$ boundedness of a Nikodym maximal function associated to a one-parameter family of tubes in $\R^{d+1}$ whose directions are determined by a non-degenerate curve $\gamma$ in $\R^d$. These operators arise in the analysis of maximal averages over space curves. The main theorem generalises the known results for $d = 2$ and $d = 3$ to general dimensions. The key ingredient is an induction scheme motivated by recent work of Ko--Lee--Oh.
\end{abstract}


\section{Introduction}\label{sec: intro}
Consider a $C^{\infty}$ non-degenerate curve $\gamma : I : = [-1,1] \rightarrow \R^d$. In other words, 
\begin{align*}
    \det \begin{pmatrix}
       \gamma^{(1)}(s) & \cdots & \gamma^{(d)}(s) 
   \end{pmatrix} \neq 0 \quad \textrm{for all $s \in I$}.
\end{align*}
The curve $\gamma$ defines a one-parameter family of directions in $\R^{d+1}$. For $ 0 < \delta < 1$ and $s \in I$, consider a $\delta$-tube in $\R^{d+1}$ in the direction of $\gamma(s)$, defined as 
\[T_{\delta}(s) := \{(y,t) \in \R^{d} \times I : |y - t\gamma(s)| \leq \delta \}\]
and the corresponding averaging operator
\begin{align}\label{eq: averaging op}
    \firstavg g(x,s) & : = \frac{1}{|T_{\vecr}(s)|}\int_{{T}_{\vecr}(s)} g(x - y,t) \dy \dt, \qquad \text{ for } x \in \R^d
\end{align} 
whenever $g \in L^1_{\mathrm{loc}}(\R^{d+1})$. Our goal is to investigate the $L^p$ boundedness properties of the Nikodym maximal function
\begin{align}\label{eq: main eq.}
 \maxfn g(x) & : = \sup_{s \in I} |\firstavg g(x,s)|.
\end{align}
The main result is as follows.
\begin{theorem}\label{thm: main} Let $\gamma: I \rightarrow \R^d$ be a non-degenerate curve. There exists $C_{d,\gamma} > 0$ such that
    \begin{align*} 
\norm{\maxfn}_{L^2(\R^{d+1}) \rightarrow L^2(\R^{d})} \leq C_{d,\gamma} (\log \delta^{-1})^{d/2} \qquad \textrm{ for all $0 < \delta < 1$}.
    \end{align*}
\end{theorem}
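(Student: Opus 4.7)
The plan is to prove Theorem~\ref{thm: main} by induction on the dimension $d$, along the Ko--Lee--Oh scheme alluded to in the abstract. The base case $d = 2$ corresponds to the classical $L^2$ Nikodym bound for a one-parameter family of tubes in $\R^3$ with $(\log\delta^{-1})^{1/2}$ loss, which can be recovered from Plancherel combined with an angular decomposition.

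For the inductive step I would first perform a Littlewood--Paley decomposition in the $x$-frequency variable, writing $g = \sum_j g_j$ with each $g_j$ frequency-localised at $|\xi| \sim 2^j$. Only the scales $1 \leq 2^j \lesssim \delta^{-1}$ are delicate: the low-frequency piece is dominated by the Hardy--Littlewood maximal function, while for $2^j \gg \delta^{-1}$ repeated integration by parts in the defining integral produces rapid decay. Combining the $O(\log\delta^{-1})$ single-scale estimates via Cauchy--Schwarz produces one factor of $(\log\delta^{-1})^{1/2}$, so the task reduces to proving
\begin{equation*}
\|\maxfn g_j\|_{L^2(\R^d)} \lesssim (\log\delta^{-1})^{(d-1)/2}\|g_j\|_{L^2(\R^{d+1})}
\end{equation*}
for each dyadic frequency piece $g_j$.

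For the single-scale bound I would apply the Sobolev-type inequality
\begin{equation*}
\sup_{s \in J}|F(s)|^2 \lesssim |J|^{-1}\|F\|_{L^2(J)}^2 + \|F\|_{L^2(J)}\|\partial_s F\|_{L^2(J)}
\end{equation*}
to $F(s) = \firstavg g_j(x,s)$, after partitioning $I$ into sub-intervals $\{I_\nu\}$ of length $\rho$. On each $I_\nu$ centred at $s_\nu$, the non-degeneracy of $\gamma$ and the Taylor expansion
\begin{equation*}
\gamma(s) - \gamma(s_\nu) = \sum_{k=1}^{d-1}\frac{(s-s_\nu)^k}{k!}\gamma^{(k)}(s_\nu) + O(\rho^d)
\end{equation*}
allow an affine change of coordinates in $x$, adapted to the basis $\{\gamma^{(k)}(s_\nu)\}_{k=1}^d$, that turns the local operator (modulo an error of size $\rho^d$, absorbable into the tube thickness $\delta$ once $\rho^d \lesssim \delta$) into a Nikodym maximal function for a non-degenerate curve in $\R^{d-1}$ at an anisotropically rescaled thickness. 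The inductive hypothesis then yields a $(\log\delta^{-1})^{(d-2)/2}$ bound per sub-interval, and the $L^2$-orthogonality provided by the frequency localisation allows one to sum over $\nu$; optimising $\rho$ produces the desired $(\log\delta^{-1})^{(d-1)/2}$ factor.

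The main obstacle will be the clean implementation of the rescaling step. The affine change of variables introduces anisotropic scales depending on both $\rho$ and $\delta$, and one must verify that the reduced operator is genuinely a Nikodym maximal function for a non-degenerate curve in $\R^{d-1}$ with constants controlled only by $\gamma$ and $d$. Coordinating the three scales $\delta$, $\rho$ and $2^{-j}$ through the induction without accumulating polynomial (rather than logarithmic) losses at any step is the technical heart of the argument.
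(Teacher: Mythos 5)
Your overall architecture (Littlewood--Paley in the $x$-frequency, a Sobolev embedding in $s$, localisation of $s$ to intervals of length $\rho$, an anisotropic rescaling adapted to $\{\gamma^{(k)}(s_\nu)\}$, and an induction) matches the spirit of the paper, but the inductive step as you state it has a genuine gap: inducting on the ambient dimension $d$ does not close. After localising to $|s-s_\nu|\leq\rho$ and rescaling by the anisotropic map that dilates the $\gamma^{(k)}(s_\nu)$-direction by $\rho^{-k}$, the resulting object is \emph{not} a Nikodym maximal function for a non-degenerate curve in $\R^{d-1}$: the rescaled curve still lives in $\R^d$, the tubes become anisotropic plates whose eccentricities involve both $\rho$ and $\delta$, and the frequency support is not confined to a $(d-1)$-dimensional subspace --- what improves is only that, on the relevant frequency region, the first $d-1$ derivatives of the rescaled curve dominate the pairing with $\xi$. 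Hence the isotropic dimension-$(d-1)$ statement you propose as the inductive hypothesis does not apply to the rescaled operator; you would need a strictly stronger anisotropic-tube hypothesis to run an induction on $d$ (this is exactly the extension the paper defers to its Theorem~\ref{thm: aniso}, and it requires modified symbol estimates). The paper instead fixes $d$ and inducts on the \emph{order of non-degeneracy} $L$ of the pair (symbol, curve): symbols of type $(\lambda,A,L)$ satisfy $\sum_{i=1}^{L}|\langle\gamma^{(i)}(s),\xi\rangle|\sim|\xi|$ on their support, the rescaled pair is shown to be of type $(\rho^{d}\lambda,A_1,L-1)$ with respect to a curve that is still $d$-dimensional, and the base case is $L=1$ (non-stationary phase), not $d=2$.

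A second concrete gap is the single ``optimised'' scale $\rho$. Absorbing the Taylor error into the tube thickness forces $\rho^{d}\lesssim\delta$, i.e.\ $\rho\lesssim\delta^{1/d}$, but at that scale the localised pieces only handle frequencies lying very close to the degenerate set where $\xi$ is nearly conormal to $\gamma^{(1)}(s),\dots,\gamma^{(d-1)}(s)$; frequencies at larger distance from this set must be paired with longer $s$-intervals matched to that distance, otherwise either the reduction to a lower-order non-degenerate object fails or the orthogonality in $\nu$ is lost. The paper therefore decomposes dyadically in the distance to the degenerate surface (the function $G$ in \S\ref{subsec:initial decomp}), producing $O(\log\lambda)$ scales $\rho=2^{n}\lambda^{-1/d}$: the $n=0$ piece is estimated directly by kernel/Schur bounds, the $n\geq1$ pieces by rescaling and the induction hypothesis, and it is the Cauchy--Schwarz over these $O(\log\lambda)$ pieces --- not an optimisation in $\rho$ --- that accounts for the extra $(\log\delta^{-1})^{1/2}$ per step and hence the final exponent $d/2$. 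Your accounting of one $(\log\delta^{-1})^{1/2}$ from summing frequency scales versus the paper's one-time cost in the fractional Sobolev embedding is a harmless difference; the missing multi-scale decomposition in the distance to the degenerate set, and the incorrect identification of the rescaled operator, are the substantive issues.
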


By interpolating with the trivial bound at $L^{\infty}$, we estimate the $L^p$ operator norm for the maximal function as $O((\log \delta^{-1})^{{d}/{p}})$ for $2 \leq p \leq \infty$. This is sharp in the sense that the $L^p$ operator norm has polynomial blowup in $\delta^{-1}$ for $1 \leq p < 2$ (see \S\ref{sec: sharpness of main thm}). The result is new for $d \geq 4$. The theorem also slightly strengthens the known estimates for $d = 2$ and $d = 3$ (see \cite[Lemma 1.4]{mockseegsogg} and \cite[Proposition 5.5]{bghs-lp}, respectively) by improving the dependence on $\delta^{-1}$.

The operator $\maxfn$ is a variant of the classical Nikodym maximal function considered in \cite{Cordoba_forward_squarefn}. The main difference lies in the dimensional setup of the problem: by the above definition, $\maxfn$ maps functions on $\R^{d+1}$ to functions on $\R^d$, whereas the classical operator considered in \cite{Cordoba_forward_squarefn} is a mapping between functions on the same Euclidean space. 

Maximal functions of the form \eqref{eq: main eq.} naturally arise in the study of local smoothing problems for averaging operators associated to $\gamma$, as first observed in Mockenhaupt--Seeger--Sogge \cite{mockseegsogg}. In \cite[Lemma~1.4]{mockseegsogg} estimates for $\maxfn$ were obtained for $d = 2$. The $d=3$ case was later considered in \cite[Propostion~5.5]{bghs-lp}, in relation to the problem of bounding the helical maximal function. The averages $\firstavg$ are also closely related to the restricted X-ray transforms considered in \cite{Pramanik_Seeger(Xray),ko--Lee--Oh_X_ray_transform,Greenleaf_X_ray_L^2}.

Our method differs from the existing literature \cite{mockseegsogg, bghs-lp} in two key respects. First, we use a fractional Sobolev embedding argument to dominate the maximal function by a Fourier integral operator (see Proposition~\ref{prop: Sobolev embedding}). This allows us to fully access orthogonality in the subsequent decomposition. Secondly, we use an induction scheme, which hides the complexity of the root analysis in \cite{bghs-lp}. The induction is motivated by \cite{Ko_Lee_Oh(Local_smoothing)}, where a (more complex) induction argument is used to investigate the local smoothing problem associated to averages along curves in $\R^d$.


\subsection*{Outline of the paper}
This paper is structured as follows: 
\begin{itemize}
    \item In \S\ref{sec: initial reduct} we reduce the proof of Theorem~\ref{thm: main} to Proposition~\ref{prop: main} via fractional Sobolev embedding.
    \item In \S\ref{sec: proof of Main prop} we present the inductive proof of Proposition~\ref{prop: main}.
    \item In \S\ref{sec: sharpness of main thm} we discuss the sharpness of Theorem~\ref{thm: main}.
    \item In \S\ref{sec: further ext} we state an anisotropic extension of Theorem~\ref{thm: main} and briefly discuss its proof.
\end{itemize}

\subsection*{Acknowledgment}
The author is indebted to Jonathan Hickman for both suggesting the problem and many helpful suggestions in the development of this paper.

The author was supported by The Maxwell Institute Graduate School in Analysis and its Applications, a Centre for Doctoral Training funded by the UK Engineering and Physical Sciences Research Council (grant EP/L016508/01), the Scottish Funding Council, Heriot--Watt University, and the University of Edinburgh.


\section{Notational conventions} \label{sec: notation}
For a set $E \subseteq \R^n$, we denote its characteristic function by $\chi_{E}$. Given $f \in L^1(\R^n)$ we let either $\hat{f}$ or $\mathcal{F}(f)$ denote its Fourier transform and $\check{f}$ or $\mathcal{F}^{-1}(f)$ denote its inverse Fourier transform, which are normalised as follows:
\begin{equation*}
    \hat{f}(\xi) := \int_{\R^n} e^{-i x \cdot \xi} f(x)\,\mathrm{d} x, \qquad \check{f}(\xi) := \int_{\R^n} e^{i x \cdot \xi} f(x)\,\mathrm{d} x.
\end{equation*}

For $m \in L^{\infty}(\R^n) $, we denote by $m(\tfrac{1}{i}\partial_{x})$ the Fourier multiplier operator defined by its action on $g \in \schw{}$ as
\[\ft{}{m(\tfrac{1}{i}\partial_{x})g}(\xi) := m(\xi)\ft{}{g}(\xi) \qquad \textrm{ for $\xi \in \R^n$}. \]

Finally, given two non-negative real numbers $A,B$, and a list of parameters $M_1, \dots, M_n$, the notation $A \lesssim_{M_1, \dots, M_n} B$ or $A = O_{M_1, \dots, M_n}(B)$ signifies that $A \leq C B$ for some constant $C = C_{M_1, \dots, M_n} > 0$ depending only on the parameters $M_1, \dots, M_n$. In addition, $A \sim_{M_1, \dots, M_n} B$ is used to signify that both $A \lesssim_{M_1, \dots, M_n} B$ and $B \lesssim_{M_1, \dots, M_n} A$ hold.


\section{Initial reductions and Sobolev embedding}\label{sec: initial reduct}
\subsection{Initial reductions} Let $I := [-1,1]$ and $\gamma \colon I \to \R^d$ be a non-degenerate curve, as in \S\ref{sec: intro}. We begin by replacing the classical averaging operators by Fourier integral operators. Given ${a \in L^{\infty}( \R^{d} \times I \times I)}$, consider 
\begin{align}\label{eq: defining fio avg}
    \avg{a}{\gamma} g(x,s) & : = \int_{I}\int_{\R^d} e^{i \inn{x - t\gamma(s)}{\xi}} a(\xi,s,t)\ft{x}{g}(\xi,t) \dxi \dt  \qquad \textrm{for $g \in \schw{d+1}$,}
\end{align}
where $\ft{x}{g}(\xi,t)$ denotes $\ft{x}{g(\,\cdot\,,t)}(\xi)$, the Fourier transform of $g$ in $x$ only.
Define the associated maximal operator
\begin{align*}
    \mathcal{N}[a,{\gamma}]g(x,s) & : = \sup_{s \in I} |\avg{a}{\gamma}g(x,s)|.
\end{align*} 

Choose a function $\psi \in C_{c}^{\infty}(\R)$ with $\supp{\psi} \subseteq [-1,1]$ such that its inverse Fourier transform $\check{\psi}$ is non-negative and $\check{\psi}(y) \gtrsim 1$ whenever $|y| \leq 1$. Let $\tilde{\chi}_{I}$ be a non-negative smooth function that satisfies $\tilde{\chi}_{I}(x) = 1$ for all $x \in I$ and $\tilde{\chi}_{I}(x) = 0$ when $x \notin [-2,2]$. Define
\begin{align}\label{eq: symb_defn}
 \ar (\xi,s,t) := \psi(\delta|\xi|) \tilde{\chi}_{I}(s)\tilde{\chi}_{I}(t). 
\end{align}

Let $K_{\vecr}$ denote the kernel of the averaging operator $\firstavg$ defined in \eqref{eq: averaging op}. In particular,
\begin{equation*}
    K_{\vecr}(x,s,t) := \frac{1}{|T_{\vecr}(s)|} \chi_{T_{\vecr}(s)}(x,t). 
\end{equation*}
By integral formula for the inverse Fourier transform and a change of variable,
\[ K_{\vecr}(x,s,t) \lesssim_{d} \int_{\R^d} e^{i\inn{x - t\gamma(s)}{\xi}} \ar(\xi,s,t) \dxi. \]
Thus, the pointwise estimate \[ |\firstavg g (x,s)| \lesssim_{d} |\mathcal{A}[\ar,\gamma]g(x,s)| \] holds. It is therefore enough to bound the operator $\mathcal{N}[\ar,\gamma]$. 

We now perform an endpoint Sobolev embedding to replace the $L^{\infty}_s$ norm in the maximal function with an $L^2_{s}$ norm. Here we write 
\[ \mathfrak{D}_{s}\avg{a}{\gamma} := (1+ \sqrt{-\partial_{s}^{2}})^{1/2} \avg{a}{\gamma}, \] 
where $a$ and $\gamma$ are as above and $(1+ \sqrt{-\partial_{s}^{2}})^{1/2}$ is the fractional differential operator in $s$ with multiplier $(1+|\sigma|)^{1/2}$. 

\begin{proposition}\label{prop: Sobolev embedding} For a nondegenerate curve $\gamma: I \rightarrow \R^d$ , $0< \delta < 1$ and $\ar$ as defined in \eqref{eq: symb_defn}, we have
\[    \norm{\mathcal{N}[\ar,\gamma]g}_{L^2(\R^{d})} \lesssim_{} |\log \delta|^{1/2}\norm{\fravg{\ar}{\gamma}g}_{L^2(\R^{d+1})} + \norm{g}_{L^2(\R^{d+1})}\]
for all $g \in \schw{d+1}$.
\end{proposition}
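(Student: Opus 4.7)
The idea is to replace the endpoint embedding $L^\infty_s \hookrightarrow H^{1/2}_s$ (which fails in one dimension) with a Littlewood--Paley decomposition in the $s$-variable combined with Bernstein's inequality, accepting a logarithmic loss. The key structural observation is that, up to rapidly decaying tails, $F(x,s) := \avg{\ar}{\gamma}g(x,s)$ has $s$-Fourier transform essentially supported in $|\sigma| \lesssim \delta^{-1}$: this is because $\partial_s(t\gamma(s)\cdot\xi)$ is $O(\delta^{-1})$ on $\supp \ar$, so non-stationary phase produces rapid decay of the $s$-Fourier transform beyond $|\sigma| \sim \delta^{-1}$. This truncation of frequency scales is precisely what generates the $(\log \delta^{-1})^{1/2}$ factor.

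I would fix a standard Littlewood--Paley decomposition $F = P_0 F + \sum_{k \geq 1} P_k F$ in the $s$-variable, with $P_k$ localising to $|\sigma| \sim 2^k$. By Minkowski together with the one-dimensional Bernstein inequality,
\[ \norm{\sup_{s \in I} |F|}_{L^2_x} \leq \sum_{k \geq 0} \norm{\sup_{s}|P_k F|}_{L^2_x} \lesssim \norm{P_0 F}_{L^2_{x,s}} + \sum_{k \geq 1} 2^{k/2} \norm{P_k F}_{L^2_{x,s}}. \]
I would then split the high-frequency sum at $K := \lceil \log_2 \delta^{-1} \rceil$. For $1 \leq k \leq K$, Cauchy--Schwarz gives
\[ \sum_{k=1}^{K} 2^{k/2} \norm{P_k F}_{L^2} \leq K^{1/2} \Big(\sum_{k \geq 1} 2^k \norm{P_k F}_{L^2}^2\Big)^{1/2} \lesssim |\log \delta|^{1/2} \norm{\fravg{\ar}{\gamma} g}_{L^2}, \]
using Plancherel to identify $\sum_k 2^k \norm{P_k F}_{L^2}^2$ with $\norm{\mathfrak{D}_s F}_{L^2}^2$ up to constants. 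For the low-frequency block, the crude bound $\norm{P_0 F}_{L^2} \lesssim \norm{F}_{L^2_{x,s}} \lesssim \norm{g}_{L^2}$, obtained from Plancherel in $x$, Cauchy--Schwarz in $t$ and the compact $s,t$-support supplied by $\tilde{\chi}_I$, suffices.

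For the tail $k > K$, the plan is repeated integration by parts in $s$ against the phase $\Phi := s\sigma + t\gamma(s)\cdot\xi$: on $\supp \ar$ one has $|\xi| \lesssim \delta^{-1}$, so for $|\sigma| \geq C\delta^{-1}$ the first $s$-derivative $\partial_s\Phi$ has size $\sim |\sigma|$, while all higher $s$-derivatives are $O(\delta^{-1}) \ll |\sigma|$. After Plancherel in both variables and Cauchy--Schwarz in $t$, this produces the bound $\norm{P_k F}_{L^2_{x,s}} \lesssim_N 2^{-Nk} \norm{g}_{L^2}$ for any $N$, so the tail contribution $\sum_{k > K} 2^{k/2} \norm{P_k F}_{L^2}$ forms a convergent geometric series dominated by $\norm{g}_{L^2}$. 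The main obstacle will be keeping this non-stationary phase argument clean: at each iteration one must verify that the factor $\delta^{-1}$ arising from $\partial_s^k(t\gamma(s)\cdot\xi)$ is always swallowed by $|\partial_s\Phi| \sim |\sigma|$, so that the decay $|\sigma|^{-N}$ genuinely survives after the symbol is differentiated. Combining the three pieces then yields the proposition.
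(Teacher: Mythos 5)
Your proposal is correct and follows essentially the same route as the paper: truncate the $s$-frequency at scale $\delta^{-1}$, control the low frequencies by the $\mathfrak{D}_s$ term at the cost of a $|\log\delta|^{1/2}$ factor, and kill the high frequencies by integration by parts in $s$, exploiting that $|\sigma + t\inn{\gamma'(s)}{\xi}| \sim |\sigma|$ once $|\sigma| \geq C\delta^{-1}$ on $\supp \ar$. The only (cosmetic) difference is that you implement the low-frequency embedding via dyadic Littlewood--Paley pieces, Bernstein, and Cauchy--Schwarz over the $O(\log\delta^{-1})$ blocks, whereas the paper uses a single cutoff $\tilde{\chi}$ and Young's inequality with the multiplier $(1+|\sigma|)^{-1/2}\tilde{\chi}(\sigma)$, whose $L^2$ norm is $O(|\log\delta|^{1/2})$; just take the splitting index $K$ at $\log_2(C\delta^{-1})$ rather than $\log_2\delta^{-1}$ so the non-stationary phase bound applies to every tail block.
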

\begin{proof}
Let $\tilde{\chi}: \R \rightarrow [0,1]$ satisfy $\tilde{\chi}(\sigma) = 1$ for all $\sigma \in (-C\delta^{-1}, C\delta^{-1})$ and $\tilde{\chi}(\sigma) = 0$ when $\sigma \notin (-2C\delta^{-1},2C\delta^{-1})$. 
The constant $C$ is chosen large enough to satisfy the requirements of the forthcoming argument. Defining
\begin{align*}
    \avga{\ar}{\gamma}  := \tilde{\chi}(\tfrac{1}{i} \partial_s) \circ \avg{\ar}{\gamma} \qquad \textrm{and} \qquad \avgb{\ar}{\gamma}  := \avg{\ar}{\gamma} - \avga{\ar}{\gamma},
\end{align*}
where the multiplier operator $\tilde{\chi}(\tfrac{1}{i} \partial_s)$ is defined in \S\ref{sec: notation},
it suffices to prove 
\begin{align}
\norm{\avga{\ar}{\gamma}g}_{L^{2}_{x}L^{\infty}_{s}(\R^{d} \times I)} & \lesssim |\log \delta|^{1/2}\norm{\fravg{a_\vecr}{\gamma}g}_{L^2(\R^{d+1} )} \label{eq: umain estimates},\\
\norm{\avgb{\ar}{\gamma}}_{L^{2}_{x}L^{\infty}_{s}(\R^{d} \times I)} & \lesssim \norm{g}_{L^2(\R^{d+1})} \label{uerror estimates}
\end{align}
for all $g \in \schw{d+1}$.

To prove \eqref{eq: umain estimates}, fix $g \in \schw{d+1}$ and write
\begin{align*}
     \avga{\ar}{\gamma} g(x,s)
     = \tilde{\chi}_1(\tfrac{1}{i} \partial_s) \circ \fravg{\ar}{\gamma}g(x,s) \qquad \textrm{for  $(x,s) \in \R^d \times I,$}
\end{align*}
where $\tilde{\chi}_1(\sigma) := (1+|\sigma|)^{-1/2}\tilde{\chi}(\sigma)$.
Temporarily fix $x \in \R^d$. The above expression can be written as a convolution product in $s$ variable between $\ift{s}{\tilde{\chi}_1}$ and $\fravg{\ar}{\gamma}g(x, \, \cdot \,).$ Using Young's inequality, Plancherel's theorem and by noting that the $L^2$ norm of $\tilde{\chi}_1$ is $O(|\log \delta|^{1/2})$, we obtain 
\begin{align*}
   \norm{\avga{\ar}{\gamma} g(x,\,\cdot\,)}_{L^{\infty}_s(I)} \lesssim_{} |\log \delta|^{1/2}\norm{\fravg{\ar}{\gamma}g(x,\,\cdot\,)}_{L^2_s(\R)}.
\end{align*}
Combining Fubini's theorem with the above estimate for each $x \in \R^d$, we obtain \eqref{eq: umain estimates}.

To prove \eqref{uerror estimates}, write
\begin{align*}
    \avgb{\ar}{\gamma} g & = (1+ \sqrt{-\partial^{2}_{s}})^{-1} \circ (1+ \sqrt{-\partial^{2}_{s}}) \circ (1 - \tilde{\chi})(\tfrac{1}{i}\partial_{s})\circ \avg{\ar}{\gamma}g  \\
    & = \tilde{\chi}_2(\tfrac{1}{i} \partial_s) \circ \tilde{\chi}_3(\tfrac{1}{i} \partial_s) \circ \avg{\ar}{\gamma}g
\end{align*}
where  
\[\tilde{\chi}_{2}(\sigma) := (1+ |\sigma|)^{-1}(1-\tilde{\chi}(\sigma))^{1/2} \quad \textrm{and} \quad  \tilde{\chi}_{3}(\sigma) := (1+ |\sigma|)(1-\tilde{\chi}(\sigma))^{1/2} \]
for $\sigma \in \R$. Note that $(1+ |\sigma|)^{-1}(1-\tilde{\chi}(\sigma))^{1/2}$ has uniformly bounded $L^2$ norm (in $\delta$). Thus, an application of Young's convolution inequality gives
\[ \norm{\avgb{\ar}{\gamma} g(x,\cdot)}_{L^{\infty}_{s}(I)} \lesssim_{} \norm{\tilde{\chi}_3(\tfrac{1}{i} \partial_s) \circ \avg{\ar}{\gamma}g(x,\cdot)}_{L^2_s(\R)} \qquad \textrm{ for } x \in \R^d.\]
Integrating in  $x$ using Fubini's theorem, 
\[  \norm{\avgb{\ar}{\gamma} g}_{L^{2}_{x}L^{\infty}_{s}(\R^{d} \times I)} \lesssim_{} \norm{\tilde{\chi}_3(\tfrac{1}{i} \partial_s) \circ \avg{\ar}{\gamma}g}_{L^2(\R^{d+1})}. \]
By Plancherel's theorem, the quantity on the right can be estimated from above by $L^2$ norm of the function $\mathcal{B}_{\textrm{err}}[\ar, \gamma,  \tilde{\chi_{3}}]g$, where 
\begin{align*}
      \mathcal{B}_{\textrm{err}}[\ar, \gamma,  \tilde{\chi_{3}}]g(\xi,\sigma) & := \int_{I} b_{\vecr}(\xi, \sigma, t)  \ft{x}{g}(\xi,t) \dt
\end{align*}
for
\begin{align}\label{eq: br--oscillatory}
 b_{\vecr}(\xi,\sigma, t) & :=  \tilde{\chi}_3(\sigma) \int_{I}  e^{-i( \sigma s + t\inn{\gamma(s)}{\xi})} \ar(\xi,s,t)  \ds.
\end{align} 
By Minkowski's integral inequality and Plancherel's theorem,
\begin{align*}
    \norm{\mathcal{B}_{\textrm{err}}[\ar, \gamma,  \tilde{\chi_{3}}]g}_{L^2(\R^{d+1})} \lesssim  \norm{b_{\vecr}}_{L^{\infty}_{\xi,t}L^2_{\sigma}(\R^{d} \times I \times \R)} \norm{{g}}_{L^2(\R^{d} \times I)}.
\end{align*}
Thus, the proof of \eqref{uerror estimates} boils down to the estimate $\norm{b_{\vecr}(\xi, \, \cdot \, , t)}_{{L^2_{\sigma}(\R)}} \lesssim_{} 1$ uniformly in $(\xi,t) \in \R^{d} \times I$. Since $|\xi| \lesssim \delta^{-1}$ and $C$ is large, 
\[ |\sigma + t \inn{\gamma'(s)}{\xi}| \sim |\sigma| \quad \textrm{ whenever} \quad (\xi,s,t) \in \supp{\ar},\ \sigma \in\ \supp{\tilde{\chi}_3}. \]
Noting the easy estimate $|\partial_{s}^{\beta}\ar(\xi,s,t)| \lesssim_{\beta} 1$ for all $\beta \in \N$, we apply integration-by-parts to estimate the oscillatory integral in \eqref{eq: br--oscillatory}. In particular,
\[ b_{\vecr}(\xi,\sigma, t) = O_{N,\gamma}((1+ |\sigma|)^{-N}) \qquad \textrm{$(\xi,t) \in \R^{d} \times I$ and $N \geq 1$}. \] It is evident that the required $L^2$ estimate for $b_{\vecr}$ follows from this rapid decay, completing the proof of \eqref{uerror estimates}.
\end{proof}
Proposition~\ref{prop: Sobolev embedding} reduces the analysis to estimating the operator $\fravg{\ar}{\gamma}$. We begin by dyadically decomposing the frequency space. Suppose $\eta, \beta \in C_{c}^{\infty}(\R)$ are the classical Littlewood--Paley functions such that 
\begin{align}\label{eq: littlewood_paley}
\supp{\eta} \subseteq \{ r \in \R: |r| \leq 2 \}, \qquad \supp{\beta} \subseteq \{ r \in \R: {1}/{2} \leq |r| \leq 2 \}    
\end{align}
and 
\begin{align*}
    \eta(r) + \sum_{\lambda \in 2^{\N}} \beta(r/\lambda) = 1 \qquad \textrm{for all $ r \in \R$.}
\end{align*} 
For $\lambda \in \{0\} \cup 2^{\N}$, introduce the dyadic symbols 
\[ a^{\lambda}_{\vecr}({\xi,s,t}) :=  \begin{cases}
    \ar(\xi,s,t)\eta(|\xi|) &\text{if} \ \lambda = 0, \\
\ar(\xi,s,t)\beta(|\xi|/\lambda) & \text{if} \ \lambda \in 2^{\N}.
\end{cases} \]

Theorem~\ref{thm: main} is a consequence of the following result.
\begin{proposition}\label{prop: main}
 Let $\lambda \in \{0\} \cup 2^{\N}$ and $0 < \delta < 1$. Then, 
\begin{align*}
\norm{\fravg{a^{\lambda}_{\vecr}}{\gamma}}_{L^2(\R^{d+1}) \rightarrow L^2(\R^{d+1})} \lesssim_{d,\gamma} (\log (2 + \lambda) )^{(d-1)/2}.
    \end{align*}    
\end{proposition}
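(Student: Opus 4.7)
I would prove Proposition~\ref{prop: main} by induction, with base case $\lambda = 0$ (or more generally $\lambda = O(1)$) handled trivially: the symbol $a^{\lambda}_{\vecr}$ is smooth and compactly supported on a set of bounded measure, and Plancherel combined with Cauchy--Schwarz furnishes the constant bound.

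\emph{Reduction to a frequency-localised estimate.} Applying Plancherel in both $x$ and $s$ and using that
\begin{equation*}
\mathcal{F}_{x,s}\!\big(\avg{a^{\lambda}_{\vecr}}{\gamma}g\big)(\xi,\sigma) \;=\; (2\pi)^{d}\!\int_{I} \hat g(\xi,t)\,m_\xi(\sigma,t)\,dt, \qquad m_\xi(\sigma,t) := \int_{I} e^{-i(s\sigma + t\langle\gamma(s),\xi\rangle)}\,a^{\lambda}_{\vecr}(\xi,s,t)\,ds,
\end{equation*}
the claim reduces to proving, uniformly in $\xi\in\R^{d}$ with $|\xi|\sim\lambda$, the one-dimensional estimate
\begin{equation*}
\int_{\R} (1+|\sigma|)\,\Big|\int_{I} f(t)\,m_\xi(\sigma,t)\,dt\Big|^{2}\,d\sigma \;\lesssim_{d,\gamma}\; (\log\lambda)^{\,d-1}\,\norm{f}_{L^{2}(I)}^{2}
\end{equation*}
for all $f \in L^{2}(I)$. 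Integration by parts in $s$, just as in the proof of Proposition~\ref{prop: Sobolev embedding}, confines the analysis to $|\sigma|\lesssim\lambda$, where the weight satisfies $(1+|\sigma|)^{1/2}\lesssim\lambda^{1/2}$.

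\emph{Inductive step via Frenet decomposition and anisotropic rescaling.} On the localised region, I would decompose the $s$-interval $I$ into subarcs $I_\nu=[s_\nu-\rho, s_\nu+\rho]$ centred at points $s_\nu$, and simultaneously decompose the frequency region $\{|\xi|\sim\lambda\}$ according to the magnitudes of the coordinates of $\xi$ in the basis dual to $\{\gamma^{(j)}(s_\nu)\}_{j=1}^{d}$. On each subarc perform the substitution $s = s_\nu + \rho\tau$ together with an anisotropic rescaling of $\xi$ of the form encoded in $\rescalemat$. The transformed oscillatory integral $m_\xi$ then corresponds to the same type of operator associated with a new non-degenerate curve at a strictly smaller dyadic frequency scale $\lambda' < \lambda$. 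Applying the induction hypothesis gives a bound of order $(\log\lambda')^{(d-1)/2}$ on each piece; summing the contributions over $\nu$ via $L^{2}$-orthogonality in $s$---which is precisely what Proposition~\ref{prop: Sobolev embedding} furnishes by replacing $L^{\infty}_{s}$ with an $L^{2}_{s}$-based norm---closes the induction, with the number of pieces and the reduction in frequency calibrated so that the total loss is bounded by the claimed $(\log\lambda)^{(d-1)/2}$.

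\emph{Main obstacle.} Designing the rescaling is the delicate point. One must choose the subarc length $\rho$ and the anisotropic dilation of $\xi$ so that (i) the rescaled curve is non-degenerate with constants uniform in $\nu$ and $\lambda$, (ii) the new frequency scale $\lambda'$ is calibrated to close the induction with the claimed polylogarithmic loss, and (iii) the decomposition retains enough $s$-orthogonality across subarcs to absorb the summation in $\nu$ without incurring an extra logarithmic factor. Making these three requirements compatible, and keeping track of the linear algebra of the Frenet-frame decomposition along the way, is the main technical challenge.
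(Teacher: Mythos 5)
Your high-level ingredients (freezing $\xi$ by Plancherel, decomposing $I$ into subarcs adapted to the degenerate frequency region, an anisotropic rescaling in the Frenet frame, and summing the pieces by orthogonality made available because the $L^\infty_s$ norm has already been replaced by an $L^2_s$-based one) are the same as the paper's. The genuine gap is in the induction itself: you induct on the dyadic frequency $\lambda$ with the exponent frozen at $(d-1)/2$, claiming each rescaled piece is "the same type of operator at a strictly smaller dyadic frequency scale" and that the bookkeeping can be "calibrated". Neither claim survives inspection. The decomposition about the degenerate set unavoidably produces $\sim\log\lambda$ dyadic shells, indexed by a distance parameter $\rho=2^{n}\lambda^{-1/d}$ which ranges all the way up to a constant; after rescaling, the shell at parameter $\rho$ lives at frequency $\rho^{d}\lambda$, which for the outermost shells is comparable to $\lambda$ itself, not strictly smaller. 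Your scheme therefore yields a recursion of the shape $C(\lambda)^2\lesssim(\log\lambda)\,\sup_{\mu\lesssim\lambda}C(\mu)^2$, which does not close at any fixed power of $\log\lambda$: even granting the induction hypothesis on every piece, summing the $\log\lambda$ shells costs an extra factor $(\log\lambda)^{1/2}$ that a $\lambda$-induction with a fixed exponent cannot absorb. The "calibration" you defer to the end is exactly the missing idea, not a technical detail.

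What actually makes the argument close in the paper is a different induction variable: the degeneracy order $L$ of the symbol--curve pair (Definition~\ref{def: symb_type_L}, Proposition~\ref{prop: main_L}), with the exponent $(L-1)/2$ growing by $1/2$ per step. After localising near the root $\sigma(\xi)$ of $\inn{\gamma^{(N-1)}\circ\sigma(\xi)}{\xi}=0$ and rescaling, one non-degenerate direction is consumed: the rescaled symbol is of type $(\rho^{N}\lambda,A_1,N-1)$ with respect to a curve in $\modelcurves{B_1,N-1}$ (Lemmas~\ref{lem: curve rescaled to type N-1} and~\ref{lem: rescaled to type N-1}), so each of the $\log\lambda$ shells is bounded by $(\log\lambda)^{(N-2)/2}$ regardless of whether its rescaled frequency is small, and the $(\log\lambda)^{1/2}$ lost in summing the shells is exactly offset by the increment in the exponent. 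Note also that this forces the statement being proved to be strengthened (arbitrary curves in $\modelcurves{B,L}$, symbols of type $(\lambda,A,L)$, constants uniform under rescaling), and requires a direct, non-inductive Schur-test estimate both for the base case $L=1$ and for the innermost shell (Lemma~\ref{lem: base case}, Lemma~\ref{lem: n = 0 case}); your plan, which keeps the original statement and the exponent $(d-1)/2$ throughout, has no mechanism playing either role. So as written the inductive step does not close, and identifying the quantity that genuinely improves under rescaling (the degeneracy order, not the frequency) is the ingredient you would need to add.
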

\begin{proof}[Proposition~\ref{prop: main} $\implies$ Theorem~\ref{thm: main}] Let $\tilde{\eta}, \tilde{\beta} \in C_c^{\infty}(\R)$ be two non-negative functions such that $\tilde{\eta}(r) = 1$ for $r \in \supp{\eta}$, $ \tilde{\beta}(r) = 1$ for $r \in \supp{\beta}$ and
\[
\tilde{\eta}(r) + \sum_{\lambda \in 2^{\N}} \tilde{\beta}(r/\lambda)\lesssim 1 \qquad \textrm{for all $ r \in \R$.}\]
For $g \in \schw{d+1}$, define
\begin{align*}
    g^{\lambda} := \begin{cases}
        \tilde{\eta}\left(|\tfrac{1}{i}\partial_{x}|\right)g & \textrm{ if $\lambda = 0$},\\[2pt]
        \tilde{\beta}\left(|\tfrac{1}{i}\partial_{x}/{\lambda}|\right)g   & \textrm{ if $\lambda \geq 1$}.
    \end{cases}
\end{align*}
It is clear from the definitions that $\fravg{\ar^{\lambda}}{\gamma}g  = \fravg{a^{\lambda}_{\vecr}}{\gamma}g^{\lambda}$.
By Plancherel's theorem and the support properties of the $a^{\lambda}_{\vecr}$, we have 
\begin{align*}
    \norm{\fravg{\ar}{\gamma}g }_{L^2(\R^{d+1})}^2 \lesssim_{d} \sum_{\lambda \in \{0\} \cup 2^{\mathbb{N}}} \norm{\fravg{a^{\lambda}_{\vecr}}{\gamma}g^{\lambda} }^2_{L^2(\R^{d+1})}.
\end{align*}
Applying Proposition~\ref{prop: main} for each $\lambda$ and observing that $\ar^{\lambda} = 0$ when $\delta^{-1} \lesssim_{d} \lambda$, we obtain
\begin{align*}
    \norm{\fravg{\ar}{\gamma}g }^2_{L^2(\R^{d+1})} & \lesssim_{d,\gamma} (\log (2+ \lambda))^{d-1}\sum_{\lambda \in \{0\} \cup 2^{\mathbb{N}} } \norm{g^{\lambda}}^2_{L^2(\R^{d+1})} \\
    & \lesssim_{d,\gamma} (\log \delta^{-1})^{d-1} \norm{g}^2_{L^2(\R^{d+1})}. 
\end{align*}
Combining the above inequality with Proposition~\ref{prop: Sobolev embedding}, we deduce Theorem~\ref{thm: main}.
\end{proof}
The multiplier associated to  $\fravg{a^{0}_{\vecr}}{\gamma}$ is a bounded function and so the $\lambda = 0$ case of Proposition~\ref{prop: main} is immediate. More interesting cases arise when $\lambda \in 2^{\N}$.


\section{The proof of Proposition~\ref{prop: main}}\label{sec: proof of Main prop}

\subsection{ Setting up the induction scheme} Fix $\lambda \in 2^{\N}$. We begin with few basic definitions.
\begin{definition}
Let $1 \leq L \leq d$. Define $\modelcurves{B,L}$ to be the collection of all curves $\gamma : I \rightarrow \R^d$ such that for all $s \in I$, we have
\begin{align}\label{eq: class of curves}
            \norm{\gamma}_{C^{2d}(I)} \leq B \qquad \textrm{ and} \qquad \big| \det\begin{pmatrix}
            \gamma^{(1)}(s) & \cdots &  \gamma^{(L)}(s)
            \end{pmatrix} \big|\geq {B}^{-1},
\end{align}
where the square of the determinant is interpreted as the sum of squares of its $L \times L$ minors.
\end{definition}
\begin{definition} \label{def: symb_type_L}
 Let $1 \leq L \leq d$ 
and $\gamma \in \modelcurves{B,L}$. A symbol $a \in C^{3d}( \R^d \times I \times I)$ is said to be of type $(\lambda, A,L)$ with respect to $\gamma$ if the following hold:
    \begin{enumerate}[i)]
        \item There exists a constant $C = C(A,B) > 1$, independent of $\lambda$, such that 
        \[ \spsupp{\xi}{a} \subseteq  \{ \xi \in \R^d : C\lambda \leq |\xi| \leq 2C\lambda \}.\]
        \item $|\partial_{s}^{\beta}a(\xi,s,t)| \lesssim_{\beta,A} 1 $ for $ 0 \leq \beta \leq 3d$ and $(\xi,s,t) \in \supp{a}$.
            \item The inner product estimate 
     \begin{gather}
        A^{-1}|\xi| \leq  \sum_{i = 1}^L |\langle \gamma^{(i)}(s),\xi \rangle| \leq A |\xi| \qquad \textrm{ holds for all $(\xi,s) \in \spsupp{\xi,s}{a}$.} \label{eq: nondegassum}
\end{gather}
 \end{enumerate}
\end{definition}
Proposition~\ref{prop: main} is consequence of the following result.
\begin{proposition}\label{prop: main_L}
 Fix $1 \leq L \leq d$, $\gamma \in \modelcurves{B,L}$ and let $a$ be a symbol of type $(\lambda,A,L)$ with respect to $\gamma$. Then,
\begin{align*}
\norm{\fravg{a}{\gamma}}_{L^2(\R^{d+1}) \rightarrow L^2(\R^{d+1})} \lesssim_{A,B,d} (\log \lambda)^{(L-1)/2}.
 \end{align*}
 \end{proposition}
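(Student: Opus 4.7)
The plan is to prove Proposition~\ref{prop: main_L} by induction on $L$, ranging from $L=1$ to $L=d$, with each inductive step contributing a factor $(\log\lambda)^{1/2}$.

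For the base case $L=1$, the type condition reads $|\inn{\gamma'(s)}{\xi}|\sim|\xi|$ on $\spsupp{\xi,s}{a}$, giving first-order non-degeneracy of the $s$-phase. The plan is to apply Plancherel in both $x$ and $s$, reducing the $L^2\to L^2$ bound for $\fravg{a}{\gamma}$ to a uniform-in-$\xi$ boundedness on $L^2(I)$ of the operator with kernel
\begin{equation*}
K(\xi, t, t') := \int_{\R} (1+|\sigma|)\, b(\xi, \sigma, t)\, \overline{b(\xi, \sigma, t')}\, \dsig, \qquad b(\xi, \sigma, t) := \int_I e^{-i(\sigma s + t \inn{\gamma(s)}{\xi})}\, a(\xi, s, t)\, \ds,
\end{equation*}
and then applying Schur's test. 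The bound $\sup_t\int|K(\xi,t,t')|\,\dt'\lesssim 1$ should follow from repeated integration by parts in $s$ in the oscillatory integral defining $b$, exploiting $|\sigma + t\inn{\gamma'(s)}{\xi}|\gtrsim\max(|\sigma|, |\xi|)$ away from a single critical point.

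For the inductive step, assuming the statement at level $L-1$, the key move is a smooth dyadic decomposition of $a$ in the level sets of $|\inn{\gamma^{(L)}(s)}{\xi}|/|\xi|$. I would choose cutoffs $\chi_\mu$ supported in $\{(\xi, s): |\inn{\gamma^{(L)}(s)}{\xi}|\sim\mu|\xi|\}$ for $\mu \in \{2^{-j}: 0 \leq j \leq C\log\lambda\}$ and set $a_\mu := a\chi_\mu$, so that $a=\sum_\mu a_\mu$ has $O(\log\lambda)$ non-trivial pieces. For each $\mu$ below a small absolute threshold, condition~\eqref{eq: nondegassum} combined with $|\inn{\gamma^{(L)}(s)}{\xi}|\ll|\xi|$ forces $\sum_{i=1}^{L-1}|\inn{\gamma^{(i)}(s)}{\xi}|\gtrsim|\xi|$ on $\supp{a_\mu}$; hence $a_\mu$ is of type $(\lambda, A', L-1)$ with respect to $\gamma$ for some $A' = A'(A,B)$, and the inductive hypothesis gives $\norm{\fravg{a_\mu}{\gamma}}_{L^2\to L^2}\lesssim_{A,B,d}(\log\lambda)^{(L-2)/2}$. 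The finitely many $\mu\sim 1$ pieces would be handled by a separate direct argument: here $|\inn{\gamma^{(L)}(s)}{\xi}|\sim|\xi|$ provides $L$-th order non-degeneracy of the $s$-phase, and the required bound on the relevant restriction of $K$ should follow by van der Corput type estimates in $s$.

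To combine the pieces, I would establish the almost orthogonality
\begin{equation*}
\norm{\fravg{a}{\gamma}g}^2_{L^2(\R^{d+1})} \lesssim \sum_\mu \norm{\fravg{a_\mu}{\gamma}g}^2_{L^2(\R^{d+1})}.
\end{equation*}
The clean version of this holds immediately for the unweighted operator $\avg{a}{\gamma}$: for each fixed $s$ the cutoffs $\chi_\mu(\cdot, s)$ have (bounded-overlap) disjoint $\xi$-supports, and Plancherel in $x$ integrated in $s$ gives the analogue with $\avg{}$ in place of $\fravg{}$. For $\fravg{}$, the plan is to push the fractional derivative into a modified symbol via pseudodifferential calculus, replacing $(1+\sqrt{-\partial_s^2})^{1/2}$ by $(1+|t\inn{\gamma'(s)}{\xi}|)^{1/2}$ on the relevant microlocal support and controlling the remainder. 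Summing the $O(\log\lambda)$ dyadic terms then yields $\norm{\fravg{a}{\gamma}}^2_{L^2\to L^2}\lesssim(\log\lambda)\cdot(\log\lambda)^{L-2}=(\log\lambda)^{L-1}$, as required.

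The main obstacle is the almost orthogonality at the level of $\fravg{}$ rather than $\avg{}$: since $(1+\sqrt{-\partial_s^2})^{1/2}$ is non-local in $s$, the clean per-$s$ $\xi$-disjointness does not transfer mechanically through the fractional derivative, and one must either justify the pseudodifferential reduction (with quantitative remainder control using the symbol bounds in Definition~\ref{def: symb_type_L}) or work directly with the off-diagonal bilinear kernels $K^{\mu,\nu}$ using integration by parts. A secondary technical point is handling the $\mu\sim 1$ case, where no reduction to level $L-1$ applies, and ensuring admissible dependence of the inductive constants $A'$ on $A, B, d$ (uniform in $\mu$) so that the induction closes.
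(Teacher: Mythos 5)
Your overall architecture (induction on $L$, base case via Plancherel/Schur test with integration by parts, a $(\log\lambda)^{1/2}$ loss per step from an $O(\log\lambda)$-piece decomposition) matches the paper, and your small-$\mu$ pieces are fine: indeed, as in the paper's first splitting, any region where the type-$(\lambda,A,L)$ condition survives with the $L$-th term removed is of type $(\lambda,A',L-1)$ and is handled by the induction hypothesis (you do not even need a dyadic family here; a single cutoff suffices). The genuine gap is your treatment of the piece where $|\inn{\gamma^{(L)}(s)}{\xi}|\sim|\xi|$. That piece contains the truly degenerate region where \emph{all} of $|\inn{\gamma^{(i)}(s)}{\xi}|$, $1\le i\le L-1$, are small compared to $|\xi|$, and this is where essentially the whole difficulty of the proposition lives; it cannot be dispatched by ``van der Corput type estimates in $s$.'' Quantitatively: the Cauchy--Schwarz/Schur scheme that absorbs the half $s$-derivative requires kernel bounds of the form \eqref{eq: desired_ker_estimates} with a single scale $\Lambda$ for both $\iota=0$ and $\iota=1$. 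On your $\mu\sim 1$ piece the phase has only $L$-th order non-degeneracy, so van der Corput gives $|K[a](\xi,t',t)|\lesssim (1+\lambda|t-t'|)^{-1/L}$, i.e. $\Lambda^{-1}=\lambda^{-1/L}$ at best for $\iota=0$; but $\mathfrak{d}_s a$ carries the factor $t\inn{\gamma'(s)}{\xi}$, which on this region is only constrained to be $\lesssim \lambda$ (the lower-order inner products may be of any intermediate size, e.g. $\sim\lambda^{1-\epsilon}$), so the $\iota=1$ kernel is of size up to $\lambda^{2}(1+\lambda|t-t'|)^{-1/L}$, far larger than $\Lambda=\lambda^{1/L}$. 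The mismatch is polynomial in $\lambda$, not logarithmic, so the direct argument fails. Moreover the expected bound for this piece is itself $(\log\lambda)^{(L-2)/2}$, so some appeal to the inductive hypothesis on it is unavoidable.

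What the paper does on exactly this region is the content you are missing: it solves $\inn{\gamma^{(L-1)}\circ\sigma(\xi)}{\xi}=0$ by the implicit function theorem, introduces the anisotropically weighted distance $G(\xi,s)$ to the codimension-$L$ degenerate set $\Gamma$, and decomposes dyadically in $G$ into $O(\log\lambda)$ pieces $a^{n}$. The innermost piece ($n=0$) is estimated directly, and only there does a direct kernel estimate work, because on it the $s$-support has length $O(\lambda^{-1/L})$ \emph{and} $|\inn{\gamma'(s)}{\xi}|\lesssim\lambda^{1/L}$, so \eqref{eq: desired_ker_estimates} holds with the matched scale $\Lambda=\lambda^{1/L}$. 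Each outer piece ($n\ge1$) is further localised in $s$ to intervals of length $\rho=2^{n}\lambda^{-1/L}$, rescaled by the curve-adapted linear map $T^{L}_{s_0,\rho}$, and shown (Lemmas on the rescaled curve and symbol) to produce a pair in $\modelcurves{B_1,L-1}$ with a symbol of type $(\rho^{L}\lambda,A_1,L-1)$, to which the induction hypothesis is applied; the $(\log\lambda)^{1/2}$ loss of the step comes from the $O(\log\lambda)$ values of $n$, and the $\nu$-sum is controlled by frequency localisation through $\sigma(\xi)$ (a condition on $\xi$ alone, so it commutes with the $s$-multiplier --- which also answers your secondary concern about pushing $(1+\sqrt{-\partial_s^2})^{1/2}$ through the decomposition). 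Without this multi-scale decomposition, rescaling, and second use of the inductive hypothesis, your inductive step does not close.
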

 In view of \eqref{eq: nondegassum}, it is clear that Proposition~\ref{prop: main} corresponds to the case $L = d$ of Proposition~\ref{prop: main_L}. 
 
 Proposition~\ref{prop: main_L} is proved by inducting on $L$. Given an arbitrary symbol $a \in C^{3d}(\R^d \times I \times I)$ and a smooth curve $\gamma$, we present here a general argument which will be used repeatedly through the induction process in order to obtain favourable norm bounds for the Fourier integral operator $\fravg{a}{\gamma}$. For $g \in \schw{d}$, we aim for the estimate
\begin{align} \label{eq: uniform_op_bound}
    \norm{\fravg{a}{\gamma}g}_{L^2(\R^{d+1})} \lesssim_{A,B,d} \norm{g}_{L^2(\R^{d+1})}.
\end{align}
By applying Plancherel's theorem and the Cauchy--Schwarz inequality,
\begin{align*}
    \norm{\fravg{a}{\gamma}g}_{L^2(\R^{d+1})}^2 
    &=\int_{\R} (1+ |\sigma|) |\ft{x,s}{\avg{a}{\gamma}g}|^2(\sigma,\xi) \dxi \mathrm{d}\sigma \\
    & \lesssim \norm{\avg{a}{\gamma}g}_{L^2(\R^{d+1})}\norm{(1+ \sqrt{-\partial_{s}^{2}})\avg{a}{\gamma}g}_{L^2(\R^{d+1})} \\
    & \leq \norm{\avg{a}{\gamma}g}_{L^2(\R^{d+1})}^2 \\
    &  \qquad +\norm{\sqrt{-\partial_{s}^{2}}\avg{a}{\gamma}g}_{L^2(\R^{d+1})} \norm{\avg{a}{\gamma}g}_{L^2(\R^{d+1})}.
    \end{align*}
Since the Hilbert transform is bounded on $L^2$,
\[ \norm{\sqrt{-\partial_{s}^{2}}\avg{a}{\gamma}g}_{L^2(\R^{d+1})} \lesssim \norm{\partial_{s}\avg{a}{\gamma}g}_{L^2(\R^{d+1})}.\]
Thus, to prove \eqref{eq: uniform_op_bound}, it suffices to show that there exists $\Lambda > 1$ such that
\[\norm{\partial_{s}^{\iota}\avg{a}{\gamma}}_{L^2(\R^{d+1}) \rightarrow L^2(\R^{d+1})} \lesssim_{A,B,d} \Lambda^{(2\iota - 1)/2} \quad \textrm{for } \iota = 0,1. \]
Applying Plancherel's theorem and the Cauchy--Schwarz inequality, 
 \begin{align*}
     \norm{\avg{a}{\gamma}g}_{L^2(\R^{d+1})}^2  & \sim_{d}  \int_{I}\int_{\R^d} \mathcal{B}[a]\ft{x}{g}(\xi,t) \overline{\ft{x}{g}(\xi,t)} \dxi \dt \\
     & \leq  \int_{\R^d}\norm{\mathcal{B}[a]\ft{x}{g} (\xi,\cdot)}_{L^2(\R)} \norm{\ft{x}{g}(\xi,\cdot)}_{L^2(\R)} \dxi,
\end{align*}
where $\mathcal{B}[a]$ is the operator that integrates (in $t'$ variable) functions against the kernel 
\begin{align}\label{eq: kernel expression}
    K[a](\xi,t',t) := \int_{I} e^{i \inn{ (t -t')\gamma(s)}{\xi}} a(\xi,s,t') \overline{a(\xi,s,t)} \ds.
 \end{align}
At this point, note that $\partial_{s}\avg{a}{\gamma}g$ can be expressed as $\avg{\mathfrak{d}_sa}{\gamma}g$, with a symbol
     \[ \mathfrak{d}_sa(\xi,s,t) := t\inn{\gamma'(s)}{\xi} a(\xi,s,t) + \partial_{s}a(\xi,s,t) \quad \textrm{ for $(\xi,s,t) \in \R^{d+2}.$}\]
Applying Schur's test, we see that \eqref{eq: uniform_op_bound} is a consequence of the estimates
 \begin{equation}\label{eq: desired_ker_estimates}
     \sup_{(\xi,t') \in \mathbb{A}_{\lambda} \times I} \norm{K[\mathfrak{d}_s^{\iota}a](\xi,t',\cdot)}_{L^1_t(I)} \lesssim_{A,B,d} \Lambda^{2\iota - 1} \quad \textrm{ for } \iota = 0,1, 
 \end{equation}
completing the discussion.

The first application of this reduction is the following lemma.
\begin{lemma}[Base case] \label{lem: base case}
 Proposition~\ref{prop: main_L} holds when $L = 1$.
 \end{lemma}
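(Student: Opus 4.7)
The plan is to derive the $L = 1$ case of Proposition~\ref{prop: main_L} from the kernel reduction \eqref{eq: desired_ker_estimates}, taking $\Lambda = \lambda$ and applying non-stationary phase directly to the oscillatory integral \eqref{eq: kernel expression}. The hypothesis \eqref{eq: nondegassum} with $L = 1$ forces $|\langle \gamma^{(1)}(s), \xi\rangle| \geq A^{-1}|\xi|$ on $\spsupp{\xi,s}{a}$, and since $|\xi| \sim \lambda$ there, the phase $\phi_{\xi,t,t'}(s) := \langle (t - t')\gamma(s), \xi\rangle$ appearing in \eqref{eq: kernel expression} admits the lower bound $|\phi_{\xi,t,t'}'(s)| \gtrsim_A |t - t'|\lambda$, with the higher derivatives controlled by $|\partial_s^k \phi_{\xi,t,t'}(s)| \lesssim_B |t-t'|\lambda$ for $2 \leq k \leq 3d$.

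With these phase bounds and Definition~\ref{def: symb_type_L}(ii) in hand, $N$-fold integration by parts in $s$ (using $e^{i\phi} = (i\phi')^{-1}\partial_s e^{i\phi}$) will yield the pointwise decay
\[ |K[\mathfrak{d}_s^{\iota}a](\xi, t', t)| \lesssim_{N,A,B,d} \lambda^{2\iota}\,(1 + |t-t'|\lambda)^{-N}, \qquad \iota = 0, 1, \]
for $N \leq 3d$, uniformly in $(\xi, t') \in \mathbb{A}_{\lambda} \times I$ and $t \in I$. The factor $\lambda^{2\iota}$ accounts for the extra $t\langle \gamma'(s), \xi\rangle$ in $\mathfrak{d}_s a$; the key book-keeping is that all $s$-derivatives of $\mathfrak{d}_s a$ remain of size $O(\lambda)$ (since $\partial_s^{\beta} \bigl(t\langle \gamma'(s),\xi\rangle\bigr) = O(\lambda)$ and $\partial_s^{\beta}a = O(1)$), so iterated differentiation of the amplitude in the integration-by-parts contributes no further powers of $\lambda$ beyond the initial ones.

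Finally, splitting the $t$-integral at $|t - t'| = \lambda^{-1}$, the near-diagonal piece is handled by the trivial bound $|K[\mathfrak{d}_s^{\iota}a]| \lesssim_d \lambda^{2\iota}$ on a set of measure $O(\lambda^{-1})$, while the off-diagonal piece uses the $N = 2$ instance of the decay; both contribute $O(\lambda^{2\iota - 1})$. This gives \eqref{eq: desired_ker_estimates} with $\Lambda = \lambda$, and the reduction preceding Lemma~\ref{lem: base case} then yields \eqref{eq: uniform_op_bound}, which is exactly Proposition~\ref{prop: main_L} for $L = 1$ (since $(\log\lambda)^0 = 1$). No serious obstacle is anticipated here: the whole argument is a single application of the non-stationary phase principle enabled by the one-derivative non-degeneracy, with the only mild care being to track the $\lambda$-dependence of $\mathfrak{d}_s a$.
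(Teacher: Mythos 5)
Your proposal is correct and follows essentially the same route as the paper: the non-degeneracy hypothesis with $L=1$ gives $|\langle\gamma'(s),\xi\rangle|\sim_A\lambda$ on the support, integration by parts in $s$ in \eqref{eq: kernel expression} (keeping track that all $s$-derivatives of $\mathfrak{d}_s^{\iota}a$ are $O(\lambda^{\iota})$) yields $|K[\mathfrak{d}_s^{\iota}a](\xi,t',t)|\lesssim\lambda^{2\iota}(1+|t-t'|\lambda)^{-N}$, and integrating in $t$ gives \eqref{eq: desired_ker_estimates} with $\Lambda=\lambda$, hence \eqref{eq: uniform_op_bound}. Your explicit splitting of the $t$-integral at $|t-t'|=\lambda^{-1}$ simply spells out the step the paper dismisses as ``clearly''; there is no substantive difference.
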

 \begin{proof}
Choose a curve $\gamma$ and a symbol $a$ that satisfies the assumptions of Proposition~\ref{prop: main_L} with $L = 1$. In particular, $a$ is of type $(\lambda, A,1)$ with respect to $\gamma$ and as a consequence,
\begin{equation*}
    |\inn{\gamma'(s)}{\xi}| \sim_{A} \lambda \qquad \textrm{holds for 
 } (\xi,s) \in \spsupp{\xi,s}{a}.
\end{equation*}
Following the previous discussion, we wish to obtain good decay estimates for the function $K[\mathfrak{d}_s^{\iota}a]$ with $\iota = 0,1$. Integrating-by-parts in \eqref{eq: kernel expression} and using Definition~\ref{def: symb_type_L}~ii), we have
\[|K[\mathfrak{d}_s^{\iota}a](\xi,t',t)| \lesssim_{A,B,N} \lambda^{2\iota}(1 + |t - t'|\lambda)^{-N} \qquad \textrm{for } \iota = 0,1 \textrm{ and } N \geq 1.\]
Clearly, these decay estimates imply the required bounds \eqref{eq: desired_ker_estimates} with $\Lambda = \lambda$. Consequently, we obtain \eqref{eq: uniform_op_bound} with the implicit constant depending only on $A$, $B$ and $d$.
\end{proof}
Lemma~\ref{lem: base case} addresses the base case of Proposition~\ref{prop: main_L}. It remains to establish the inductive step.
 \begin{proposition}\label{prop: induction}
Suppose the statement of Proposition~\ref{prop: main_L} is true for $L = N-1$. Then it is also true for $L = N$.
 \end{proposition}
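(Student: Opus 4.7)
The plan is to decompose the symbol $a$ dyadically according to the size of $|\langle \gamma^{(N)}(s), \xi\rangle|/|\xi|$, reducing to a type-$(N-1)$ symbol on most pieces via the induction hypothesis, and exploiting $L^2$-orthogonality among the resulting $O(\log \lambda)$ pieces to recover the correct exponent $(N-1)/2$.

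Fix a small constant $c_0 > 0$ depending on $A$, $B$, $d$, $N$. Using a smooth Littlewood--Paley partition of unity in the variable $|\langle \gamma^{(N)}(s), \xi\rangle|/|\xi|$, I would write $a = a^{\mathrm{high}} + \sum_\mu a^\mu$, where $\mu$ ranges over dyadic values in the interval $(\lambda^{-1}, c_0]$ (yielding $O(\log \lambda)$ pieces), the symbol $a^\mu$ is supported where $|\langle \gamma^{(N)}(s), \xi\rangle| \sim \mu |\xi|$, and $a^{\mathrm{high}}$ is supported where $|\langle \gamma^{(N)}(s), \xi\rangle| \gtrsim c_0 \lambda$. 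The low-frequency tail contribution where $|\langle \gamma^{(N)}(s), \xi\rangle| \lesssim \lambda^{-1}|\xi|$ would be absorbed into one of the pieces by routine Schwartz-type kernel estimates.

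For each piece $a^\mu$ with $\mu \leq c_0$: the hypothesis that $a$ is of type $(\lambda, A, N)$ forces $\sum_{i=1}^{N-1} |\langle \gamma^{(i)}(s), \xi\rangle| \sim \lambda$ on the support (since the $N$-th summand contributes at most $c_0\lambda \ll \lambda$). Consequently $a^\mu$ is a symbol of type $(\lambda, A', N-1)$ with respect to $\gamma$, with $A'$ depending on $A$, $B$, $N$, $c_0$. Moreover, a short linear-algebraic argument using the $C^{2d}$ bound on $\gamma$ and the full non-degeneracy at level $N$ shows that $\gamma \in \modelcurves{B', N-1}$ for some $B' = B'(B, d, N)$. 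The induction hypothesis therefore yields $\|\fravg{a^\mu}{\gamma}\|_{L^2 \to L^2} \lesssim (\log \lambda)^{(N-2)/2}$ uniformly in $\mu$. The single piece $a^{\mathrm{high}}$ is handled directly via the Schur-test reduction in \eqref{eq: desired_ker_estimates} and a van der Corput estimate of order $N$, since $|\Phi^{(N)}(s)| \sim |t-t'|\lambda$ uniformly on the support; this yields a bound at least as good as $(\log \lambda)^{(N-1)/2}$.

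The main obstacle is then the combination of the $O(\log \lambda)$ pieces into the sharp bound via $L^2$-orthogonality. For each fixed $s$, the $\xi$-supports of $a^\mu$ and $a^{\mu'}$ are disjoint dyadic slabs around the hyperplane $\{\xi : \langle \gamma^{(N)}(s), \xi\rangle = 0\}$, so a Plancherel argument in $\xi$ would in principle upgrade the Minkowski bound $\log \lambda \cdot (\log \lambda)^{(N-2)/2}$ to the sharp $(\log \lambda)^{1/2} \cdot (\log \lambda)^{(N-2)/2} = (\log \lambda)^{(N-1)/2}$. The difficulty is that the slab directions depend on $s$, so the orthogonality is not immediate; I expect it to follow from a $TT^*$ argument in which the cross-terms $\langle \fravg{a^\mu}{\gamma} g, \fravg{a^{\mu'}}{\gamma} g\rangle$ for $\mu \neq \mu'$ are estimated through a second application of the kernel reduction, with the oscillatory decay provided by the disjointness of the slabs. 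The bookkeeping of this orthogonality, along with verifying the propagation of the non-degeneracy from level $N$ to level $N-1$, is the technical heart of the induction step.
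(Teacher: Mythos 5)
Your split according to the size of $|\inn{\gamma^{(N)}(s)}{\xi}|/|\xi|$ is workable for the small pieces: on $\{|\inn{\gamma^{(N)}(s)}{\xi}|\leq c_0|\xi|\}$ the type-$(\lambda,A,N)$ hypothesis does force $\sum_{i=1}^{N-1}|\inn{\gamma^{(i)}(s)}{\xi}|\sim\lambda$, so that region carries a type-$(\lambda,A',N-1)$ symbol and the induction hypothesis applies. But note you do not need to decompose it dyadically in $\mu$ at all: a single smooth cutoff to this region already satisfies Definition~\ref{def: symb_type_L} with $L=N-1$, so one application of the induction hypothesis costs $(\log\lambda)^{(N-2)/2}$ with no recombination loss. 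The orthogonality/$TT^*$ issue you single out as the technical heart is therefore a self-inflicted complication (the paper's analogous step is the split $a=aH+a(1-H)$ in \S\ref{subsec:initial decomp}, with $a(1-H)$ handled in one stroke). The genuine gap is elsewhere: your treatment of $a^{\mathrm{high}}$, the piece where $|\inn{\gamma^{(N)}(s)}{\xi}|\gtrsim c_0|\xi|$, which is precisely where the whole difficulty of the inductive step is concentrated.

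The Schur reduction \eqref{eq: desired_ker_estimates} requires matched bounds for $\iota=0$ and $\iota=1$ with a single $\Lambda$, namely $\|K[a]\|_{L^1_t}\lesssim\Lambda^{-1}$ and $\|K[\mathfrak{d}_s a]\|_{L^1_t}\lesssim\Lambda$, where $\mathfrak{d}_s a=t\inn{\gamma'(s)}{\xi}a+\partial_s a$. On $\supp a^{\mathrm{high}}$ nothing constrains $\inn{\gamma^{(1)}(s)}{\xi}$, which ranges over all sizes up to $\sim\lambda$; a van der Corput estimate of order $N$ then gives $\|K[a^{\mathrm{high}}]\|_{L^1_t}\lesssim\lambda^{-1/N}$ but only $\|K[\mathfrak{d}_s a^{\mathrm{high}}]\|_{L^1_t}\lesssim\lambda^{2-1/N}$, so the scheme yields an operator bound of order $\lambda^{1-1/N}$ --- polynomial in $\lambda$, not $(\log\lambda)^{(N-1)/2}$. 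This mismatch is not an artifact of crude estimation: even after discarding from $a^{\mathrm{high}}$ the subregion where some lower-order pairing is comparable to $\lambda$ (which can again be sent to the induction hypothesis), what remains is exactly the paper's symbol $aH$ satisfying \eqref{eq: large Lth inn prod}--\eqref{eq: L-1 degeneracy}, on which $|\inn{\gamma'(s)}{\xi}|$ still varies from $0$ up to $\sim\lambda$, while the kernel decay varies correspondingly between $\lambda^{-1}$ and $\lambda^{-1/N}$; no single stationary-phase/Schur argument can match these two scales globally. The paper closes this piece by decomposing dyadically in the distance function $G$ to the degenerate set $\Gamma$, estimating the innermost piece $a^0$ directly (there the $s$-support has length $\sim\lambda^{-1/N}$ and Taylor expansion about $\sigma(\xi)$ gives $|\inn{\gamma'(s)}{\xi}|\lesssim\lambda^{1/N}$, so both Schur bounds match with $\Lambda=\lambda^{1/N}$, Lemma~\ref{lem: n = 0 case}), and for $n\geq1$ localising in $s$ at scale $\rho=2^n\lambda^{-1/N}$, rescaling by $T^{N}_{s_0,\rho}$ so that $(\tilde a,\tilde\gamma)$ becomes a type-$(\rho^N\lambda,A_1,N-1)$ pair (Lemmas~\ref{lem: curve rescaled to type N-1}--\ref{lem: rescaled to type N-1}), and only then invoking the induction hypothesis, with orthogonal summation in $\nu$ and a $\log\lambda$ count over $n$ supplying the extra half power of $\log$. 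Your proposal contains no substitute for this mechanism, so the inductive step is not established.
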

 Proposition~\ref{prop: main_L}, and therefore Theorem~\ref{thm: main}, follow from Proposition~\ref{prop: induction} and Lemma~\ref{lem: base case}. For the remainder of the section we present the proof of Proposition~\ref{prop: induction}, which is broken into steps.
 \subsection{Initial decomposition}\label{subsec:initial decomp}
To begin the proof of Proposition~\ref{prop: induction}, let $\gamma$ and $a$ be chosen to satisfy the assumptions of the Proposition~\ref{prop: main_L} with $L = N$. We apply a natural division of the symbol $a$. Let 
$H : \R^{d+1} \rightarrow \R$ be defined as the product \begin{align*}
H(\xi,s) := \prod_{i = 1}^{N-1} \eta(A' \lambda^{-1} \inn{\gamma^{(i)}(s)}{\xi} )
\end{align*} 
where $A'$ is large constant which will be chosen depending only on $A$, $B$ and $N$. Here $\eta$ is as defined in \eqref{eq: littlewood_paley}. Note that
\[|\partial_{s}^{\beta}H(\xi,s)| \lesssim_{\beta,A,B} 1 \qquad \textrm{ for $(\xi,s) \in \spsupp{\xi,s}{a}$ and $\beta \in \N \cup \{0\}$}.\]
Furthermore, \eqref{eq: nondegassum} holds for the pair $(\gamma, a(1-H))$ with $A$ replaced with $A'$ and $L = N-1$. Thus, $a(1-H)$ is a symbol of type $(\lambda,A',N - 1)$ with respect to $\gamma$. Applying the induction hypothesis, we deduce the desired estimate for the part of the operator corresponding to the symbol $a(1-H)$.

Since \eqref{eq: nondegassum} holds with $L = N$ in $\supp{a}$ by assumption, the inequalities 
\begin{gather}
        (10A)^{-1}|\xi| \leq |\inn{\gamma^{(N)}(s)}{\xi}|  \leq A|\xi|,  \label{eq: large Lth inn prod} \\
   \sum_{i = 1}^{N-1} |\langle \gamma^{(i)}(s),\xi \rangle|  \leq 10^{-10}A^{-1}|\xi| \label{eq: L-1 degeneracy} 
\end{gather}
also hold for all $(\xi,s) \in \spsupp{\xi,s}{aH}$, provided $A'$ is chosen large enough depending on $N$ and $A$. Henceforth, for simplicity, we write $a$ in place of $aH$ and therefore work with the stronger assumptions \eqref{eq: large Lth inn prod} and \eqref{eq: L-1 degeneracy} on the support of $a$. An application of the implicit function theorem now shows that for any $\xi \in \spsupp{\xi}{a}$, there exists $\sigma(\xi) \in I$ with
\begin{align}\label{eq: existence of sigma}
     \inn{\gamma^{(N-1)}\circ \sigma(\xi)}{\xi} = 0.
\end{align}

The strategy now involves a decomposition the symbol away from the most degenerate regions in $\R^{d+1}$. Set 
\[ G(\xi,s) := \sum_{i = 1}^{N-1} |\ezero^{-1}\lambda^{-1}\inn{\gamma^{(i)}\circ \sigma(\xi)}{\xi}|^{{\cexp}/{(N-i)}} + \ezero^{-2}|s - \sigma(\xi)|^{\cexp},\] where the constant $\ezero = \ezero(A,B)$ will be chosen small enough to satisfy the forthcoming requirements of the proof. The function $G$ should be interpreted as the function measuring the distance of $(\xi,s)$ from the co-dimension $N$ surface 
\[\Gamma := \{(\xi,s) \in \R^d \times I: \inn{\gamma^{(i)}\circ \sigma(\xi)}{\xi} = 0 \text{ for } 1 \leq i \leq N-1 \text{ and } |s - \sigma(\xi)| = 0 \}.\]
We now decompose the $(\xi,s)$-space dyadically away from $\Gamma$. Suppose $\eta_1, \beta_1 \in C_{c}^{\infty}(\R)$ are chosen such that 
\begin{align}\label{eq: littlewood_paley2}
\supp{\eta_1} \subseteq \{ r \in \R: |r| \leq 4 \}, \qquad \supp{\beta_1} \subseteq \{ r \in \R: {1}/{4} \leq |r| \leq 4 \}    
\end{align}
and 
\begin{align*}
    \eta_1(r) + \sum_{n \in \N} \beta_1(2^{-2n}r) = 1 \qquad \textrm{for all $ r \in \R$.}
\end{align*} Set 
\begin{align}\label{eq: def a_n}
 a^{n}({\xi,s,t}) := a({\xi,s,t}) \cdot \begin{cases} \eta_1( \eone^{2}\lambda^{2/N} G(\xi,s)) & \textrm{if } n = 0, \\
 \beta_1(\eone^{2}2^{-2n}\lambda^{2/N} G(\xi,s)) &\textrm{if } n \ge 1.
 \end{cases}
\end{align}
where $\eone$ will be chosen small enough (depending on $\ezero$) to satisfy the forthcoming requirements of the proof. Observe that $a = a^0 + \sum_{n \in \N} a^n$ and this automatically induces a similar decomposition for the Fourier integral operator $\fravg{a}{\gamma}$. 

Since 
\begin{align}\label{eq: uniform_bd_G}
    |G(\xi,s)| = O_{B,d}(\ezero^{-2}) \qquad \textrm{ for all $(\xi,s) \in \spsupp{\xi,s}{a}$},
\end{align}
the symbols $a^n$ are trivially zero except for $O_{A,B}(\log \lambda)$ many values of $n$. Thus, by Plancherel's theorem, 
\begin{align}
    \norm{\fravg{\sum_{n \in \Z}a^n}{\gamma}g}_{L^2(\R^{d+1})}^2 
   & =  \sum_{n \in \Z} \norm{\fravg{ a^{n}}{\gamma}g}_{L^2(\R^{d+1})}^2 \notag \\
    & \lesssim_{A,B} |\log \lambda| \max_{n \in \Z} \norm{\fravg{ a^{n}}{\gamma}g}^2_{L^2(\R^{d+1})}. \label{eq: reducing to a_n}
\end{align}

In light of the above, it remains to bound the fractional operator ${\fravg{a^n}{\gamma}}$ for different values of $n$. The case of $n = 0$ is dealt with by the following lemma. 
\begin{lemma}\label{lem: n = 0 case}
    \begin{align}\label{eq: n = 0 case}
         \norm{\fravg{ a^{0}}{\gamma}}_{L^2(\R^{d+1}) \rightarrow L^2(\R^{d+1})}  \lesssim_{A,B,d} 1.
    \end{align}
\end{lemma}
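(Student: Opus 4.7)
The plan is to invoke the Schur-test reduction from the general discussion preceding the lemma with $\Lambda := \lambda^{1/N}$, so that it suffices to establish
\begin{equation*}
\sup_{(\xi,t') \in \R^{d} \times I} \norm{K[\mathfrak{d}_s^\iota a^0](\xi,t',\cdot)}_{L^1_t(I)} \lesssim_{A,B,d} \lambda^{(2\iota - 1)/N} \qquad \textrm{for } \iota = 0,1.
\end{equation*}
All the information I need is encoded in the localization built into $a^{0}$. On its support one has $\eone^{2} \lambda^{2/N} G(\xi,s) \lesssim 1$, which forces the tight control
\begin{equation*}
|s - \sigma(\xi)| \lesssim_{\ezero,\eone} \lambda^{-1/N} \qquad \textrm{and} \qquad |\inn{\gamma^{(i)} \circ \sigma(\xi)}{\xi}| \lesssim_{\ezero,\eone} \lambda^{i/N} \textrm{ for } 1 \leq i \leq N-1.
\end{equation*}

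Next I would Taylor-expand each $\inn{\gamma^{(i)}(s)}{\xi}$ around $s = \sigma(\xi)$ and combine the above with \eqref{eq: large Lth inn prod} to propagate these estimates uniformly over the support of $a^{0}$, giving
\begin{equation*}
|\inn{\gamma^{(i)}(s)}{\xi}| \lesssim \lambda^{i/N} \textrm{ for } 0 \leq i \leq N-1 \qquad \textrm{and} \qquad |\inn{\gamma^{(N)}(s)}{\xi}| \sim \lambda.
\end{equation*}
Since $\partial_s G(\xi,s) = 2 \ezero^{-2}(s-\sigma(\xi))$ and $\partial_s^2 G = 2\ezero^{-2}$ (higher $s$-derivatives of $G$ vanish), iterated application of the chain rule to $\eta_1(\eone^{2}\lambda^{2/N} G)$ yields $|\partial_s^{\beta} a^{0}(\xi,s,t)| \lesssim \lambda^{\beta/N}$ for $0 \leq \beta \leq 3d$. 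These in turn give the bookkeeping $|\mathfrak{d}_s^\iota a^{0}| \lesssim \lambda^{\iota/N}$ and $|\partial_s(\mathfrak{d}_s^\iota a^{0})| \lesssim \lambda^{(\iota+1)/N}$ for $\iota = 0,1$.

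To bound the kernel, I would apply van der Corput's lemma of order $N$ to the $s$-integral in the definition of $K[\mathfrak{d}_s^\iota a^{0}]$: the $N$-th $s$-derivative of the phase $(t-t')\inn{\gamma(s)}{\xi}$ has magnitude $\sim |t-t'|\lambda$, while the amplitude estimates from the previous step control both the $L^{\infty}$ norm and the $L^{1}$ norm of its $s$-derivative by $O(\lambda^{2\iota/N})$. This yields
\begin{equation*}
|K[\mathfrak{d}_s^\iota a^{0}](\xi,t',t)| \lesssim_{A,B,d} \lambda^{2\iota/N}(|t-t'|\lambda)^{-1/N}.
\end{equation*}
Combined with the trivial bound $|K[\mathfrak{d}_s^\iota a^{0}]| \lesssim \lambda^{(2\iota-1)/N}$ (from the length $\lambda^{-1/N}$ of the $s$-support) and splitting the $t$-integration at $|t-t'| = \lambda^{-1}$, the required estimate follows, using that $\int_0^2 r^{-1/N}\,\mathrm{d}r = O_N(1)$ since $N \geq 2$.

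The main obstacle will be the bookkeeping in the second step: propagating the one-point estimates $|\inn{\gamma^{(i)}\circ\sigma(\xi)}{\xi}| \lesssim \lambda^{i/N}$ to uniform estimates on all of $\spsupp{\xi,s}{a^{0}}$ via Taylor expansion, verifying that the implicitly defined $\sigma(\xi)$ is sufficiently regular (using $\partial_s\inn{\gamma^{(N-1)}(s)}{\xi} = \inn{\gamma^{(N)}(s)}{\xi} \sim \lambda$), and tracking carefully which constants depend on $\ezero,\eone$ versus only on $A,B,d$. Once these amplitude estimates are in place, the van der Corput step and the Schur-test conclusion are essentially routine.
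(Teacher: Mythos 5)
Your argument is correct and essentially the paper's: the same Schur-test reduction with $\Lambda = \lambda^{1/N}$, and the same support/Taylor estimates $|s - \sigma(\xi)| \lesssim \lambda^{-1/N}$, $|\inn{\gamma^{(i)}(s)}{\xi}| \lesssim \lambda^{i/N}$ for $1 \leq i \leq N-1$, and $|\partial_s a^{0}| \lesssim \lambda^{1/N}$, which already give the pointwise bound $|K[\mathfrak{d}_s^{\iota}a^{0}](\xi,t',t)| \lesssim \lambda^{(2\iota-1)/N}$. The van der Corput step is superfluous (the trivial bound coming from the $O(\lambda^{-1/N})$-length $s$-support, integrated over the bounded $t$-interval $I$, is all the paper uses and all you need), and your propagated estimate in the case $i = 0$, namely $|\inn{\gamma(s)}{\xi}| \lesssim 1$, is false in general but harmless since it is never used.
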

Next lemma addresses the case of all other values of $ n$.
\begin{lemma}\label{lem: n > 0} For any $n \geq 1$, we have 
\begin{align} \label{eq: n > 0}
     \norm{\fravg{a^{n}}{\gamma}}_{L^2(\R^{d+1}) \rightarrow L^2(\R^{d+1})} 
& \lesssim_{A,B,d} (\log \lambda)^{{(N-2)}/{2}}.
\end{align}
\end{lemma}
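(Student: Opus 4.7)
The plan is to reduce the estimate to the induction hypothesis of Proposition~\ref{prop: main_L} at level $L = N-1$. Set $\mu_n := 2^n \eone^{-1}\lambda^{-1/N}$, so that $G(\xi,s) \sim \mu_n^2$ on $\supp{a^n}$. This forces $|s-\sigma(\xi)| \lesssim \ezero\mu_n$ and $|\inn{\gamma^{(i)}\circ\sigma(\xi)}{\xi}| \lesssim \ezero\lambda\mu_n^{N-i}$ for $1 \le i \le N-1$, with at least one of these bounds being sharp. After an additional $O(1)$ decomposition of $a^n$ I may assume a fixed dominant index $i_0 \in \{1, \dots, N-1\}$ (or that the term $\ezero^{-2}|s-\sigma(\xi)|^2$ realises the maximum in $G$). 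Taylor-expanding around $\sigma(\xi)$ then transfers these into $|\inn{\gamma^{(j)}(s)}{\xi}| \lesssim \lambda\mu_n^{N-j}$ for $1 \le j \le N-1$, while $|\inn{\gamma^{(N)}(s)}{\xi}| \sim \lambda$ still holds on $\supp{a^n}$.

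The concrete strategy is: pigeonhole the $\xi$-support of $a^n$ into pieces on which $\sigma(\xi)$ is constant to within $\mu_n$, centred at some $\sigma_0$; perform the anisotropic change of variables $s = \sigma_0 + \mu_n r$ together with a matched linear change of the frequency variable that brings the non-degenerate sum $\sum_{i=1}^{N-1}|\inn{\tilde\gamma^{(i)}(r)}{\tilde\xi}|$, where $\tilde\gamma(r) := \gamma(\sigma_0+\mu_n r)$, into balance with $|\tilde\xi|$; and recognise the rescaled operator as one governed by the induction hypothesis at $L = N-1$ and an effective frequency $\lambda' \lesssim_{A,B,d} \lambda$. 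The dominant index $i_0$ is precisely what supplies the required lower bound in condition~(iii) of Definition~\ref{def: symb_type_L}. Invoking the induction hypothesis on the rescaled operator and undoing the rescaling (whose Jacobian and Plancherel factors contribute only an $A,B,d$-dependent constant) gives
\[ \norm{\fravg{a^n}{\gamma}}_{L^2 \to L^2} \lesssim_{A,B,d} (\log \lambda')^{(N-2)/2} \lesssim_{A,B,d} (\log \lambda)^{(N-2)/2}. \]

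The main obstacle is the rigorous construction of the rescaling: since $\sigma(\xi)$ is a non-trivial function of $\xi$, a naive substitution $s \mapsto \sigma_0 + \mu_n r$ does not directly respect the Fourier integral structure of $\avg{a^n}{\gamma}$. I would handle this by a further $\xi$-decomposition of $a^n$ at a scale chosen so that $\sigma(\xi)$ is essentially constant on each piece, absorbing the resulting orthogonality loss via Plancherel. A secondary delicate step is identifying the correct anisotropic rescaling of $\xi$ so that the rescaled symbol genuinely satisfies condition~(iii) of Definition~\ref{def: symb_type_L} at level $L = N-1$ at the effective scale $\lambda'$; the rescaling must match the anisotropy in $j$ of the Taylor estimates $|\inn{\gamma^{(j)}(s)}{\xi}| \lesssim \lambda\mu_n^{N-j}$, which is the crux of the reduction and exploits the dominant-index decomposition performed at the outset. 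One must also verify the smoothness clause (ii) of Definition~\ref{def: symb_type_L} for the rescaled symbol, which follows routinely from the smallness of $\ezero$ and $\eone$ and the chain rule applied to the cutoff $\beta_1(\eone^{2}2^{-2n}\lambda^{2/N}G)$.
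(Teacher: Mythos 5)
Your proposal is correct and follows essentially the same route as the paper: decompose $a^n$ into pieces on which $s$ (equivalently $\sigma(\xi)$) is localised at scale $\rho = 2^n\lambda^{-1/N}$, apply the anisotropic rescaling $s \mapsto \sigma_0 + \rho r$ together with the linear frequency map adapted to $\gamma^{(1)}(\sigma_0),\dots,\gamma^{(N)}(\sigma_0)$, verify the rescaled symbol is of type $(\rho^N\lambda, A_1, N-1)$ so the induction hypothesis applies, and sum the pieces via Plancherel using the finite overlap of the frequency supports determined by $\sigma(\xi)$. The only cosmetic difference is your preliminary dominant-index pigeonholing, which the paper replaces by a pointwise two-case Taylor argument (its Lemma on the inner product estimates) without any extra decomposition.
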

Assuming Lemma~\ref{lem: n = 0 case} and Lemma~\ref{lem: n > 0} for now, we  plug \eqref{eq: n = 0 case}, \eqref{eq: n > 0} into \eqref{eq: reducing to a_n} and obtain
\[\norm{\fravg{a}{\gamma}g}_{L^2(\R^{d+1})} \lesssim_{A,B,d} (\log\lambda)^{(N-1)/2}\norm{g}_{L^2(\R^{d+1})}.\]
This concludes the proof of Proposition~\ref{prop: induction}.

Rest of the section is dedicated to the proofs of the two key lemmas (Lemma~\ref{lem: n = 0 case} and Lemma~\ref{lem: n > 0}).
\subsection{Proof of Lemma~\ref{lem: n = 0 case}} 
 To prove Lemma~\ref{lem: n = 0 case}, we do not appeal to the induction hypothesis but directly estimate the operator.
\begin{proof}[Proof of Lemma~\ref{lem: n = 0 case}]
In view of discussions around \eqref{eq: uniform_op_bound} and \eqref{eq: desired_ker_estimates}, it suffices to show 
\begin{equation}\label{eq: ker_est_Nth_step}
    |K[\mathfrak{d}_{s}^{\iota}a^{0}](\xi,t',t)| \lesssim_{A,B,d} \lambda^{(2\iota - 1)/{N}} \qquad \textrm{ for  $\iota = 0,1$ and $(\xi,t',t) \in \R^d \times I \times I$}.
\end{equation}
Indeed, \eqref{eq: ker_est_Nth_step} implies \eqref{eq: desired_ker_estimates} with $a = a^{0}$ and $\Lambda = \lambda^{1/N}$, which in turn gives \eqref{eq: n = 0 case} as discussed above.

The estimate \eqref{eq: ker_est_Nth_step} for $\iota = 0$ is immediate from \eqref{eq: kernel expression} as the $\spsupp{s}{a^{0}(\xi,\cdot,\cdot)}$ is contained in an interval of length $O_{A,B}(\lambda^{-1/N})$ for any fixed $\xi \in \R^d$. By \eqref{eq: kernel expression} again, the case $\iota = 1$ becomes evident once we verify the estimates 
\begin{align*}
    |\inn{\gamma'(s)}{\xi}| + |\partial_{s}(a^{0})(\xi,s,t)| \lesssim_{A,B,d} \lambda^{{1}/{N}} \qquad \text{ for $(\xi,s,t) \in \supp{a^{0}}$.}
\end{align*}
It is easy to see that $|\partial_{s} (a^{0})(\xi,s,t)| \lesssim_{A,B} \lambda^{1/N}$.  To estimate the remaining term, note that for any $1 \leq i \leq N$, we have
\begin{align*}
|\inn{\gamma^{(i)}\circ \sigma(\xi)}{\xi}| \lesssim_{A,B,N} \lambda \lambda^{(i- N)/N} \qquad \text{and} \qquad |s - \sigma(\xi)|\lesssim_{A,B} \lambda^{-1/N}  
\end{align*}
for $(\xi,s) \in \spsupp{\xi,s}{a^{0}}$. Using Taylor's theorem,
\begin{align*}
    |\inn{\gamma^{(1)}(s)}{\xi}| & \leq \sum_{j = 1}^{N-1} |\inn{\gamma^{(j)}\circ \sigma(\xi)}{\xi}| \frac{|s - \sigma(\xi)|^{j - 1}}{(j - 1)!} + B|\xi|\frac{|s- \sigma(\xi)|^{N - 1}}{(N-1)!} \\
    & \lesssim_{A,B,d} \lambda^{1/N}
\end{align*}
for $(\xi,s) \in \spsupp{\xi,s}{a^{0}}$, as required. Thus, we obtain \eqref{eq: ker_est_Nth_step} and consequently \eqref{eq: n = 0 case}.
\end{proof}
\subsection{Further decomposition}\label{subsec: further decomp}
In order to prove Lemma~\ref{lem: n > 0} we must introduce a further decomposition of the symbol. Let $\zeta \in C_{c}^{\infty}(\R)$ be chosen such that $\supp{\zeta} \subseteq~[-1,1]$ and $\sum_{\nu \in \Z}  \zeta(\,\cdot\, - \nu) = 1$. For $n \in \mathbb{N}$ and $\nu \in \Z$, consider the symbol
\begin{align}\label{eq: defining local_symb}
 a^{n,\nu}({\xi,s,t}) := a^{n}({\xi,s,t})\zeta(2^{-n}\lambda^{1/N}(s - s_{n,\nu}))
\end{align}
where $s_{n,\nu} := 2^n\lambda^{-{1}/{N}}\nu$. Observe that the original symbol is recovered as the sum
\begin{align}\label{eq: further_decomp}
     a = \sum_{n= 0}^{C\log(\lambda)} a^{n} = a^{0} + \sum_{n= 1}^{C\log(\lambda)}\sum_{\nu \in \Z} a^{n,\nu} ,
\end{align}
where $C$ is a constant that depends only on $A,B$. 
The following lemma records a basic property of the localised symbols, which is useful later in the proof. 
\begin{lemma}\label{lem: inn prod estimaTes}
Let $n \geq 1, \nu \in \Z$ and $\rho := 2^n\lambda^{-{1}/{N}}$.
 For any $(\xi,s) \in \spsupp{\xi,s}{a^{n,\nu}}$, we have
        \begin{align}\label{eq: unscaled_sum}
            \sum_{i = 1}^{N-1} \rho^{i - N}|\inn{\gamma^{(i)}(s)}{\xi}|  \sim_{A,B,d} |\xi| \sim \lambda.
        \end{align}

\end{lemma}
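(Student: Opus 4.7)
The key facts about $\spsupp{\xi,s}{a^{n,\nu}}$ are $|s - s_{n,\nu}| \le \rho$ and, from the $\beta_1$ factor in $a^n$ (with $n \ge 1$), $G(\xi,s) \sim \eone^{-2}\rho^2$. Abbreviating $X_i := |\inn{\gamma^{(i)}\circ\sigma(\xi)}{\xi}|$ and $Y := |s - \sigma(\xi)|$, one has $X_{N-1} = 0$ by \eqref{eq: existence of sigma}, $|\inn{\gamma^{(N)}(s)}{\xi}| \sim_A |\xi| \sim \lambda$ by \eqref{eq: large Lth inn prod}, and unpacking the definition of $G$ yields the upper bounds $X_i \lesssim_d \ezero\eone^{-(N-i)}\lambda\rho^{N-i}$ for $1 \le i \le N-2$ together with $Y \lesssim_d \ezero\eone^{-1}\rho$.

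For the upper half of \eqref{eq: unscaled_sum}, I would apply Taylor's theorem to $s' \mapsto \inn{\gamma^{(i)}(s')}{\xi}$ about $s' = \sigma(\xi)$ to order $N-i$, with the remainder controlled by $\norm{\gamma}_{C^{2d}} \le B$ and $|\xi| \sim \lambda$. The $j = N-1-i$ coefficient vanishes since $X_{N-1} = 0$; substituting the bounds on the remaining $X_{i+j}$, on $Y$, and on $|\inn{\gamma^{(N)}\circ\sigma(\xi)}{\xi}|$, one finds $|\inn{\gamma^{(i)}(s)}{\xi}| \lesssim_{A,B,d} \lambda\rho^{N-i}$ for every $1 \le i \le N-1$. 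Multiplying by $\rho^{i-N}$ and summing gives the upper half of \eqref{eq: unscaled_sum}.

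For the lower half, combine $G(\xi,s) \gtrsim \eone^{-2}\rho^2$ with pigeonhole on its $N-1$ nontrivial summands to deduce either (A) $X_{i_0} \gtrsim_d \ezero\eone^{-(N-i_0)}\lambda\rho^{N-i_0}$ for some $i_0 \in \{1,\dots,N-2\}$, or (B) $Y \gtrsim_d \ezero\eone^{-1}\rho$. In case (A), the same Taylor expansion exhibits $\pm X_{i_0}$ as the $j = 0$ term, while every other term carries an extra factor of at least $\ezero$; provided $\ezero$ is small enough (depending on $d$) the constant term dominates, so $|\inn{\gamma^{(i_0)}(s)}{\xi}| \ge X_{i_0}/2$ and hence $\rho^{i_0-N}|\inn{\gamma^{(i_0)}(s)}{\xi}| \gtrsim_{A,B,d} \lambda$. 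In case (B), I would Taylor expand the identity $\inn{\gamma^{(N-1)}\circ\sigma(\xi)}{\xi} = 0$ to second order about $s$, obtaining $\inn{\gamma^{(N-1)}(s)}{\xi} = \inn{\gamma^{(N)}(s)}{\xi}(s-\sigma(\xi)) + O_B(\lambda Y^2)$; using $|\inn{\gamma^{(N)}(s)}{\xi}| \gtrsim_A \lambda$ and the smallness of $Y$ guaranteed by taking $\eone$ small, the linear term dominates and $\rho^{-1}|\inn{\gamma^{(N-1)}(s)}{\xi}| \gtrsim_{A,B,d} \lambda$.

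I expect the dominance argument in case (A) of the lower bound to be the main obstacle: one has to show that among the $O(N)$ Taylor coefficients, only the constant term $\pm X_{i_0}$ contributes to the size. This is precisely what forces the parameter hierarchy $\eone \ll \ezero \ll 1$ flagged in \S\ref{subsec:initial decomp}, and it also explains why the exponents $\cexp/(N-i)$ in the definition of $G$ are the correct ones.
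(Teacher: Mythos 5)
Your overall strategy is the paper's: the same pigeonholing on which summand of $G$ is responsible for $G\sim\eone^{-2}\rho^{2}$, and your case (A) is essentially the paper's Case 2 verbatim (Taylor expansion about $\sigma(\xi)$, constant term dominant once $\ezero$ is small --- though note the required smallness of $\ezero$ depends on $B$ and $N$ as well as $d$, since the Taylor remainder carries a factor of $B$); the upper half of \eqref{eq: unscaled_sum} is handled as the paper intends.

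The genuine gap is in your case (B), in the justification that the linear term dominates. Writing $Y:=|s-\sigma(\xi)|$ as you do, you need $O_{B}(\lambda Y^{2})\leq\tfrac12\,|\inn{\gamma^{(N)}(s)}{\xi}|\,Y$, i.e.\ $Y\leq c_{A,B}$, and you assert this smallness is ``guaranteed by taking $\eone$ small''. That deduction fails: the localisation \eqref{eq: g_localisation} only gives $Y\leq 2\ezero\eone^{-1}\rho$, while \eqref{eq: uniform_bd_G} forces $\rho\lesssim_{B,d}\eone\ezero^{-1}$, so all one obtains is $Y=O_{B,d}(1)$ \emph{uniformly in} $\eone$; shrinking $\eone$ shrinks the admissible range of $\rho$ but not this bound, and at the largest admissible scale $\rho\sim\eone\ezero^{-1}$ the quantity $Y$ may be a constant of size depending only on $B,d$, in which case $O_{B}(\lambda Y^{2})$ is comparable to the linear term and your absorption argument breaks down. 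The smallness of $Y$ that makes this case work has a different source, independent of $\eone$: the $H$-localisation of \S\ref{subsec:initial decomp} --- that is, \eqref{eq: L-1 degeneracy} (with $A'$ large) together with \eqref{eq: large Lth inn prod} and the implicit-function-theorem construction of $\sigma(\xi)$ as the nearby zero of $s'\mapsto\inn{\gamma^{(N-1)}(s')}{\xi}$ --- which pins $\sigma(\xi)$ within a fixed small (depending on $A,B$) distance of every $s$ in the fibre of $\supp{a}$. The paper's own Case 1 avoids the quadratic error entirely by using the exact mean-value form $\inn{\gamma^{(N-1)}(s)}{\xi}=\inn{\gamma^{(N)}(s_{\ast})}{\xi}(s-\sigma(\xi))$ and invoking \eqref{eq: large Lth inn prod} at the intermediate point $s_{\ast}$; even there, the availability of the lower bound at $s_{\ast}$ rests on this proximity of $\sigma(\xi)$ to $s$, not on the size of $\eone$. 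So your argument is fixable --- either establish $Y\leq c_{A,B}$ from the $H$-localisation before expanding, or use the exact mean-value form as the paper does --- but the step as written is not justified.
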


\begin{proof}
The upper bound in \eqref{eq: unscaled_sum} is easier to prove than the lower bound and follows from a similar argument. Consequently, we will focus only on the lower bound.

Fix $n \geq 1$ and $\nu \in \Z$. Recall from the definitions that 
\begin{align}\label{eq: g_localisation}
     \eone^{-2}/4 \leq \sum_{i = 1}^{N-1} |\ezero^{-1}\lambda^{-1}\rho^{i - N}\inn{\gamma^{(i)}\circ \sigma(\xi)}{\xi}|^{2/(N-i)} + |\ezero^{-1}\rho^{-1}(s - \sigma(\xi))|^2 \leq 4\eone^{-2}
\end{align}
for all $(\xi,s) \in \spsupp{\xi,s}{a^{n,\nu}}$. Fixing $\xi$, we now consider two cases depending on which terms of the above sum dominate.\medskip

\noindent \textit{Case 1}. Suppose $(\ezero \eone^{-1}\rho)/4 \leq |s - \sigma(\xi)|$.
By Taylor's theorem, there exists $s_{\ast} \in I$ between $ s$ and $\sigma(\xi)$ 
such that
\[ \inn{\gamma^{(N-1)}(s)}{\xi} -  \inn{\gamma^{(N-1)}\circ\sigma(\xi)}{\xi} = \inn{\gamma^{(N)}(s_{\ast})}{\xi}(s-\sigma(\xi)).\]
Combining this with \eqref{eq: large Lth inn prod} and \eqref{eq: existence of sigma}, we deduce that $|\inn{\gamma^{(N-1)}(s)}{\xi}| \gtrsim_{A} \lambda (\ezero \eone^{-1}\rho)$.  This gives the lower bound in \eqref{eq: unscaled_sum}.
\medskip

\noindent \textit{Case 2}.
Suppose Case 1 fails. Using \eqref{eq: g_localisation}, we can find $1 \leq i_0 \leq N-2$ such that
\begin{equation}\label{eq: eq1}
    c_N \ezero\lambda (\eone^{-1}\rho)^{N - i_0}\leq |\inn{\gamma^{(i_0)}\circ \sigma(\xi)}{\xi}| 
  \leq 2^N\ezero\lambda (\eone^{-1}\rho)^{N - i_0},
 \end{equation}
with $c_N := (4N)^{-N}$,  whilst  $|s - \sigma(\xi)| \leq \ezero \eone^{-1}\rho$ and
\begin{equation*}
    |\inn{\gamma^{(i)}\circ \sigma(\xi)}{\xi}|  \leq 2^N\ezero\lambda (\eone^{-1}\rho)^{N - i} \qquad \text{ for all $i_0 < i \leq N-1$.}
\end{equation*}
By Taylor's theorem,
\begin{align}
    |\inn{\gamma^{(i_0)}(s)}{\xi}- & \inn{\gamma^{(i_0)} \circ \sigma(\xi)}{\xi}| \notag \\
     &\leq \sum_{i = i_0 + 1}^{N - 1} 2^N\ezero^{1+i - i_0}\lambda (\eone^{-1}\rho)^{N - i} (\eone^{-1}\rho)^{i - i_0} + B\lambda (\ezero \eone^{-1}\rho)^{N-i_0} \notag \\ 
     & \leq (c_N \ezero/2) \lambda (\eone^{-1}\rho)^{N - i_0}, \label{eq: eq2}
\end{align}
provided the constant $\ezero$ is chosen small enough depending on $B$ and $N$. Combining \eqref{eq: eq1} and \eqref{eq: eq2}, we deduce that
\[ |\inn{\gamma^{(i_0)}(s)}{\xi}| \sim_{\epsilon_0} \lambda (\eone^{-1}\rho)^{N - i_0} \qquad \textrm{for all $s \in \spsupp{s}{a^{n,\nu}}$,}\]
which implies the lower bound in \eqref{eq: unscaled_sum}.
\end{proof}
In view of \eqref{eq: further_decomp}, we restrict our attention to $\fravg{a^{n,\nu}}{\gamma}$ for fixed $n \in \N$ and $\nu \in \Z$. Before proceeding to its analysis, we make the following elementary observation about the size of $\rho := 2^{n}\lambda^{-1/N}$. From the definition \eqref{eq: littlewood_paley2} of $\beta_1$, note that
 \[\eone^{-2}/4 \leq \rho^{-2}G(\xi,s) \quad \textrm{for} \quad (\xi,s) \in \spsupp{\xi,s}{a^{n}}.\] Combining this with \eqref{eq: uniform_bd_G}, we deduce that 
$\rho = O_{B,d}( \eone \ezero^{-1}).$ Thus, by choosing $\eone$ small enough (depending on $\ezero,B,d$), we can assume that
\begin{align}\label{eq: small_rho}
    \rho \leq B^{-2d}.
\end{align}

In the following subsections, the norm bounds for the operator $\fravg{a^{n,\nu}}{\gamma}$ are obtained using the induction hypothesis via a method of rescaling. 
\subsection{Rescaling for the curve}\label{subsec: rescaling} 
In this subsection, we introduce the rescaling map in a generic setup and describe its basic properties which will play a crucial role in the proof of Lemma~\ref{lem: n > 0}. 

For $\gamma \in \modelcurves{B,N}$ and $s_{\circ} \in I$, let \[V_{s_0}^N := \mathrm{span}\{\gamma^{(1)}(s_0),\dots, \gamma^{(N)}(s_0)\}.\]
Using \eqref{eq: class of curves}, note that $\dim V_{s_0}^N = N$. For $0 < \rho < 1,$ define a linear operator $T_{s_{\circ}, \rho}^N$ such that
\begin{align}\label{eq: defining T_n1}
    T_{s_{\circ}, \rho}^N \left(\gamma^{(i)}(s_0)\right) :=        \rho^{i} \gamma^{(i)}(s_0)  \qquad \textrm{for $1 \leq i \leq N$}
\end{align}
and
\begin{align*}
T_{s_{\circ}, \rho}^N v = \rho^{N} v \qquad \textrm{ for $v \in (V_{s_0}^N)^{\perp}$}.
\end{align*} 
It is clear that $T_{s_{\circ}, \rho}^N$ is a well-defined map such that 
\begin{align}\label{norm_of_rescaling_map}
    \norm{(T_{s_{\circ}, \rho}^N)^{-1}} \lesssim_{B} \rho^{-N}.
\end{align}
Supposing $[s_{\circ} - \rho, s_{\circ} + \rho] \subseteq I$, we define the rescaled curve
\begin{equation*}
    \gamma_{s_{0},\rho}^N(s)  := \big(T_{s_{\circ}, \rho}^N\big)^{-1} (\gamma(s_{\circ} +\rho s) - \gamma(s_{\circ})).
\end{equation*}
For simplicity, we introduce the notation 
\begin{align}\label{eq: rescaled_notations} 
T := T^{N}_{s_{0},\rho}, \quad  T^* := \big(T^{N}_{s_{0},\rho}\big)^{-\top}\quad \textrm{and} \quad \tilde{\gamma} := \gamma_{s_{0},\rho}^N.
\end{align}

The following lemma verifies nondegeneracy assumptions for the rescaled curve. 
\begin{lemma}\label{lem: curve rescaled to type N-1}
For $0 < \rho \leq B^{-2d}$ and $\gamma \in \modelcurves{B,N}$, the rescaled curve $\tilde{\gamma}$ as in \eqref{eq: rescaled_notations} lies in $\modelcurves{B_1,N-1}$ where $B_1$ depends only on $B$ and $N$. 
\end{lemma}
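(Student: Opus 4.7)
The plan is to exploit the structure of $T = T^N_{s_0, \rho}$: on the basis $\{\gamma^{(i)}(s_0)\}_{i=1}^N$ of $V^N_{s_0}$ it acts diagonally with eigenvalues $\rho^i$, and on $(V^N_{s_0})^{\perp}$ it is $\rho^N$ times the identity. Direct differentiation yields $\tilde{\gamma}^{(i)}(s) = \rho^i T^{-1}\gamma^{(i)}(s_0 + \rho s)$ for $i \geq 1$, and the factor $\rho^i$ is tailored precisely to cancel the $\rho^{-i}$ dilation $T^{-1}$ applies to $\gamma^{(i)}(s_0)$. Taylor expanding $\gamma^{(i)}(s_0 + \rho s)$ at $s_0$ to order $N-i$ (which is legitimate since $N \leq d$ and $\gamma \in C^{2d} \subseteq C^{N+1}$) and applying $T^{-1}$ termwise gives the clean identity
\[
\tilde{\gamma}^{(i)}(s) = \sum_{j=0}^{N-i} \frac{s^j}{j!}\gamma^{(i+j)}(s_0) + E_i(s), \qquad 1 \leq i \leq N,
\]
where $E_i(s) := \rho^i T^{-1} R_i(s)$ collects the Taylor remainder. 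The Lagrange form gives $|R_i(s)| \lesssim_B \rho^{N-i+1}$, which combined with $\|T^{-1}\| \lesssim_B \rho^{-N}$ from \eqref{norm_of_rescaling_map} yields $|E_i(s)| \lesssim_B \rho$ uniformly for $|s| \leq 1$.

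The $C^{2d}$ bound follows at once: for $0 \leq i \leq N$ the displayed formula gives $|\tilde{\gamma}^{(i)}(s)| \lesssim_{B,N} 1$ (the $i = 0$ case handled analogously by expanding $\gamma(s_0 + \rho s) - \gamma(s_0)$ and applying $T^{-1}$), while for $N < i \leq 2d$ the brutal bound $|\tilde{\gamma}^{(i)}(s)| \leq \rho^i \|T^{-1}\| \|\gamma\|_{C^{2d}(I)} \lesssim_B \rho^{i-N} \leq 1$ suffices.

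For the nondegeneracy with $L = N-1$, let $A(s)$ be the $(N-1) \times d$ matrix with rows $\tilde{\gamma}^{(1)}(s), \ldots, \tilde{\gamma}^{(N-1)}(s)$ and $V$ the $N \times d$ matrix with rows $\gamma^{(1)}(s_0), \ldots, \gamma^{(N)}(s_0)$. The Taylor identity factorises $A(s) = M(s) V + \mathcal{E}(s)$, where $M(s)$ is the $(N-1) \times N$ matrix with $M(s)_{ij} = s^{j-i}/(j-i)!$ for $j \geq i$ and zero otherwise, and $\|\mathcal{E}(s)\| \lesssim_B \rho$. By Cauchy--Binet, the target determinant quantity equals $\det(A(s) A(s)^{\top})$. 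Since $\gamma \in \modelcurves{B, N}$ forces $\det(V V^{\top}) \geq B^{-2}$ together with $\|V V^{\top}\| \lesssim_B 1$, every eigenvalue of $V V^{\top}$ lies in a positive interval depending only on $B, N$. Because the top $(N-1) \times (N-1)$ block of $M(s)$ is the identity, one has $M(s) M(s)^{\top} \succeq I_{N-1}$, so $M(s) V V^{\top} M(s)^{\top} \succeq \lambda_{\min}(VV^{\top}) I_{N-1}$ and hence its determinant is bounded below by a constant $c_{B,N} > 0$ uniformly in $|s| \leq 1$. Absorbing the $O_B(\rho)$ perturbation then gives $\det(A(s) A(s)^{\top}) \geq c_{B,N}/2$ provided $\rho$ is sufficiently small; the threshold $\rho \leq B^{-2d}$ suffices since $N \leq d$.

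The main technical obstacle is the final perturbation step, which requires tracking how the $O_B(\rho)$ correction propagates through $\det(A(s) A(s)^{\top})$. I would handle this by first applying Weyl's inequality to compare the singular values of $A(s)$ with those of $M(s) V$, and then converting back to the determinant using the uniform upper bound $\|A(s)\| \lesssim_{B,N} 1$ from the $C^{2d}$ estimate already established. The choice $\rho \leq B^{-2d}$ affords enough room because every intermediate constant is polynomial in $B$ of degree depending only on $N \leq d$.
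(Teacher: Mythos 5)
Your overall skeleton is sound and is essentially the paper's: the identity $\tilde{\gamma}^{(i)}(s) = \rho^i T^{-1}\gamma^{(i)}(s_0+\rho s)$, the Taylor expansion $\tilde{\gamma}^{(i)}(s) = \sum_{j=0}^{N-i}\tfrac{s^j}{j!}\gamma^{(i+j)}(s_0) + O_B(\rho)$ (valid since $\|T^{-1}\|\lesssim_B \rho^{-N}$), the resulting $C^{2d}$ bound, and the strategy of reading the nondegeneracy off the leading Taylor term and absorbing an $O_B(\rho)$ perturbation. However, one step as written is false: the top $(N-1)\times(N-1)$ block of $M(s)$ is \emph{not} the identity for $s\neq 0$ — it is unit \emph{upper triangular}, with entries $s^{j-i}/(j-i)!$ above the diagonal — and the inference $M(s)M(s)^{\top}\succeq I_{N-1}$ fails in general. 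For instance, with $N=3$, $M(s)=\left(\begin{smallmatrix}1 & s & s^2/2\\ 0 & 1 & s\end{smallmatrix}\right)$ gives $\det\bigl(M(s)M(s)^{\top}-I_2\bigr)=-s^2<0$, so $M(s)M(s)^{\top}-I_2$ has a negative eigenvalue. Consequently the asserted operator bound $M(s)VV^{\top}M(s)^{\top}\succeq \lambda_{\min}(VV^{\top})I_{N-1}$ does not follow as stated.

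The gap is local and repairable: from $VV^{\top}\succeq \lambda_{\min}I_N$ you do get $M VV^{\top}M^{\top}\succeq \lambda_{\min}\,MM^{\top}$, hence $\det(MVV^{\top}M^{\top})\geq \lambda_{\min}^{N-1}\det(MM^{\top})$, and $\det(MM^{\top})\geq 1$ by Cauchy--Binet because the minor of $M(s)$ formed by its first $N-1$ columns is unit upper triangular with determinant $1$. With that correction, your Weyl-plus-upper-bound perturbation step closes the argument, and your treatment of the smallness threshold $\rho\leq B^{-2d}$ is no looser than the paper's own absorption of the $O_B(\rho)$ error. For comparison, the paper reaches the $(N-1)$-column lower bound by a slightly different (and lighter) route: it uses multilinearity and column operations to show that the full $N$-column generalised determinant of $\tilde{\gamma}^{(1)},\dots,\tilde{\gamma}^{(N)}$ equals that of $\gamma^{(1)},\dots,\gamma^{(N)}$ at $s_0$ up to $O_B(\rho)$, and then divides out the factor $|\tilde{\gamma}^{(N)}(s)|\lesssim_B 1$, avoiding any spectral analysis of the Taylor coefficient matrix.
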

A key feature of Lemma~\ref{lem: curve rescaled to type N-1} is that the parameter $B_1$ is independent of $\rho$.
\begin{proof}[Proof of Lemma~\ref{lem: curve rescaled to type N-1}]
We begin by verifying the first part of \eqref{eq: class of curves} for the curve $\widetilde{\gamma}$. From the definition, we see that $\tilde{\gamma}^{(i)}(s) = \rho^{i} T^{-1} \gamma^{(i)}(s_{0} + \rho s)$ for any $i \in \N$. Combining this identity with \eqref{eq: class of curves} and \eqref{norm_of_rescaling_map}, we deduce that
\begin{align}\label{eq: estimates large i}
\norm{\tilde{\gamma}^{(i)}}_{L^{\infty}(I)} = O_{B}( \rho) \qquad \textrm{whenever $N+1 \leq i \leq 2d$.}    
\end{align} 
Let $1 \leq i \leq N$. By Taylor's theorem, \eqref{eq: defining T_n1} and \eqref{norm_of_rescaling_map}, we have
\begin{align}
    \tilde{\gamma}^{(i)}(s)
    & = \rho^{i} \sum_{j = i}^{N} T^{-1} \gamma^{(j)}(s_{0})\frac{(\rho s)^{j-i}}{(j-i)!} + O_{B}(\norm{T^{-1}}\rho^{N+1}) 
 \notag \\
    & = \sum_{j = i}^{N} \gamma^{(j)}(s_{0})\frac{s^{j-i}}{(j-i)!} + O_{B}(\rho) \label{eq: Taylor_exp_gamma}
\end{align}
Combining \eqref{eq: Taylor_exp_gamma} with \eqref{eq: class of curves}, we obtain uniform size estimates for $\tilde{\gamma}^{(i)}(s)$ when $1 \leq i \leq N$. Together with \eqref{eq: estimates large i}, this implies 
\begin{align}\label{eq: C_d_norm_gamma}
    \norm{\tilde{\gamma}}_{C^{2d}(I)} \lesssim_{B} 1.
\end{align} 

It remains to verify the second part in \eqref{eq: class of curves} for the curve $\tilde{\gamma}$ and $L = N-1$. In view of \eqref{eq: C_d_norm_gamma}, it suffices to obtain a lower bound for the determinant of the $d \times N$ matrix whose columns vectors are formed by $(\tilde{\gamma}^{(i)})_{1\leq i \leq N}$ . Observe that using the multilinearity of the determinant and elementary column operations, \eqref{eq: Taylor_exp_gamma} gives 
\begin{align*}
    |\det\begin{pmatrix}
        \tilde{\gamma}^{(1)}(s) & \cdots & \tilde{\gamma}^{(N)}(s) 
    \end{pmatrix}| = |\det\begin{pmatrix}
        {\gamma}^{(1)}(s_{n,\nu}) & \cdots & {\gamma}^{(N)}(s_{n, \nu}) 
    \end{pmatrix}|  + O_B(\rho).
\end{align*}
By the hypothesis of the lemma, $\rho$ is small enough so that the above identity combined with \eqref{eq: class of curves} gives the estimate
\begin{align*}
    |\det\begin{pmatrix}
        \tilde{\gamma}^{(1)}(s) & \cdots & \tilde{\gamma}^{(N)}(s) 
    \end{pmatrix}| \geq (2B)^{-1}.
\end{align*}
Now, an application of \eqref{eq: C_d_norm_gamma} (in particular, $|\tilde{\gamma}^{(N)}(s)| \lesssim_{B} 1$) completes the proof of \eqref{eq: class of curves} for $\gamma = \tilde{\gamma}$, $L = N-1$ and $B$ replaced with a new constant $B_1$.
\end{proof}
The rescaling map $T^{N}_{s_0,\rho}$ can be used to introduce a rescaling for the operators we are interested in. This is done in the next subsection.
\subsection{Rescaling for the operator}\label{subsec: rescaling op}
To introduce the operator rescaling, we begin by considering a Schwartz function $u: \R \rightarrow \R$. Let $s_0 \in I$ and $0 < \rho < 1$. Direct computations gives
\begin{align*}
    [(1+ \sqrt{-\partial_{s}^{2}})^{1/2}u](s_0 + \rho s) = \rho^{-1/2}[(\rho + \sqrt{-\partial_{s}^{2}})^{1/2} \tilde{u}](s), 
\end{align*}
where $\tilde{u}(s) := u(s_0 + \rho s)$. Thus,
\begin{align}
    \norm{(1+ \sqrt{-\partial_{s}^{2}})^{1/2}u}_{L^2(\R)}^2
    & \sim  \int_{\R}|(\rho + |\sigma|)^{1/2}\ft{s}{\tilde{u}}(\sigma)|^{2} \mathrm{d}\sigma  \notag \\ 
    & \leq \int_{\R}|(1 + |\sigma|)^{1/2}\ft{s}{\tilde{u}}(\sigma)|^{2} \mathrm{d}\sigma \notag\\ 
    & = \norm{(1+ \sqrt{-\partial_{s}^{2}})^{1/2} \tilde{u}}_{L^2(\R)}^{2}. 
\label{eq: eq3}
\end{align}

For an arbitrary symbol $a \in C^{2d}(\R^d \times I \times I)$ and $\gamma \in \modelcurves{B,N}$, recall the definition of $\avg{a}{\gamma}$ from \eqref{eq: defining fio avg}. Temporarily fixing $x \in \R^d$, set 
\begin{align}\label{eq: rescaling_step1}
    u(s) = \avg{a}{\gamma}g (x,s) \qquad \textrm{and} \qquad \tavg{a}{\gamma}g(x,s) := \avg{a}{\gamma}g(x,s_0 + \rho s).
\end{align}  By combining \eqref{eq: eq3} for each $x \in \R^d$ with Fubini's theorem, 
\begin{align}\label{eq: s-rescaling}
    \norm{\fravg{a}{\gamma}g}_{L^2(\R^{d+1})} \lesssim \norm{(1+ \sqrt{-\partial_{s}^{2}})^{1/2}\tavg{a}{\gamma}g}_{L^2(\R^{d+1})}.
\end{align}

 We claim that for $(x,s) \in \R^{d+1}$, the identity
\begin{align}\label{eq: rescaled operator}
   \tavg{a}{\gamma}g(x,s) =  |\det{T^*}|^{1/2}\avg{\tilde{a}}{\widetilde{\gamma}}\tilde{g}(T^{-1}x, s)
\end{align}
holds with $T$, $\tilde{\gamma}$ as in \eqref{eq: rescaled_notations}, symbol
\begin{align*}
    \tilde{a}(\xi,s,t) := a(T^*\xi,t,s_{0} +\rho s)
\end{align*}
and input function $\tilde{g}$ defined by
\begin{align*}
     \ft{x}{\tilde{g}}(\xi,t) :=  |\det{T^*}|^{1/2} e^{it \inn{T^{-1}\gamma(s_{0})}{\xi}}\ft{x}{g}(T^*\xi,t).
\end{align*}

Verifying \eqref{eq: rescaled operator} is just a matter of unwinding the definitions. First, we expand $\tavg{a}{\gamma}g(x,s)$ using \eqref{eq: rescaling_step1} as the oscillatory integral
\begin{align*}
 \int_{\R^{d} \times I} e^{i \inn{x - t(\gamma(s_{0} +\rho s) - \gamma(s_{0}))}{\xi}} a(\xi,s_{0} +\rho s,t) e^{i t\inn{\gamma(s_{0})}{\xi}} \ft{x}{g}(\xi,t) \dxi \dt.
\end{align*}
Applying change of variables $\xi \rightarrow T^* \xi$, the above expression can be written as 
\begin{align}
   |\det{T^*}|^{1/2} \int_{\R^d \times \R} e^{i \inn{(T^{-1}x - t\widetilde{\gamma}(s)}{\xi}}  &a(T^*\xi,s_{0} +\rho s,t)  \ft{x}{\tilde{g}}(T^*\xi,t) \dxi \dt \label{eq: oscill_exp}\\
    & = |\det{T^*}|^{1/2}(\avg{\tilde{a}}{\widetilde{\gamma}}\tilde{g})(T^{-1}x, s), \notag
\end{align}
proving the claim \eqref{eq: rescaled operator}. \medskip

Fix $n \in \N$, $\nu \in \Z$ and recall the definitions of $a^{n,\nu}$ and $s_{n,\nu}$ from \S\ref{subsec: further decomp}. Consider the rescaling map $T$ as defined in \S\ref{subsec: rescaling} for \[s_0 = s_{n,\nu} \qquad \textrm{and} \qquad \rho = 2^n \lambda^{-1/N}.\] Furthermore, consider the operator rescaling as in \eqref{eq: rescaled operator} for $a = a^{n,\nu}$.
In this setup, we record some of the basic properties of how $T^*$ (as defined in \eqref{eq: rescaled_notations}) interacts with $\tilde{a}$.
\begin{lemma}\label{lem: rescaling properties} 
The rescaling map $T^*$ satisfies the estimate
\begin{equation}\label{eq:T acts on resc symb}
     \rho^{-N}|\xi| \lesssim_{A,B} |T^* \,\xi| \lesssim_{B}\rho^{-N} |\xi| \qquad \text{for all} \quad \xi \in \spsupp{\xi}{\tilde{a}}.
\end{equation}
\end{lemma}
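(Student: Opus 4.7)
The plan is to prove both bounds in \eqref{eq:T acts on resc symb} by analyzing the eigenstructure of $T = T^N_{s_0,\rho}$ (with $s_0 := s_{n,\nu}$) from \eqref{eq: defining T_n1}, together with the $B$-nondegeneracy of $\{\gamma^{(1)}(s_0),\dots,\gamma^{(N)}(s_0)\}$ guaranteed by $\gamma \in \modelcurves{B,N}$. Fix an orthonormal basis $\{f_1,\dots,f_{d-N}\}$ of $(V^N_{s_0})^\perp$; then $\mathcal{B}_0 := \{\gamma^{(1)}(s_0),\dots,\gamma^{(N)}(s_0), f_1,\dots,f_{d-N}\}$ is a basis of $\R^d$ whose change-of-basis matrix (and its inverse) have norm $\lesssim_B 1$, and relative to $\mathcal{B}_0$ the map $T$ is diagonal with entries $\rho,\rho^2,\dots,\rho^N,\rho^N,\dots,\rho^N$. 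The upper bound follows immediately: $\|T^{-1}\|\lesssim_B \rho^{-N}$, hence $\|T^*\| = \|T^{-\top}\| = \|T^{-1}\|\lesssim_B \rho^{-N}$, and this step does not use the support restriction on $\xi$.

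The lower bound exploits $\eta := T^*\xi \in \spsupp{\xi}{a^{n,\nu}}$. From the defining relations of $T$ one has $\langle \xi,\gamma^{(i)}(s_0)\rangle = \rho^i\langle\eta,\gamma^{(i)}(s_0)\rangle$ for $1\le i\le N$ and $\langle\xi,v\rangle = \rho^N\langle\eta,v\rangle$ for $v \in (V^N_{s_0})^\perp$. The key claim is
\begin{align*}
|\langle\eta,\gamma^{(i)}(s_0)\rangle|\lesssim_{A,B,d}\rho^{N-i}|\eta|\qquad \textrm{for all } 1\le i\le N,
\end{align*}
which combined with the identities above yields $|\langle\xi,w\rangle|\lesssim_{A,B,d}\rho^N|\eta|$ for each $w\in\mathcal{B}_0$; a dual-basis argument (using again the $B$-nondegeneracy of $\mathcal{B}_0$) then reconstructs $|\xi|\lesssim_{A,B,d}\rho^N|\eta|$, i.e.\ $|T^*\xi|=|\eta|\gtrsim_{A,B,d}\rho^{-N}|\xi|$.

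To prove the key claim, I would pick any $s\in I$ with $(\eta,s)\in\spsupp{\xi,s}{a^{n,\nu}}$; the cutoff $\zeta(2^{-n}\lambda^{1/N}(s-s_0))$ in \eqref{eq: defining local_symb} forces $|s-s_0|\lesssim\rho$. Lemma~\ref{lem: inn prod estimaTes} gives $|\langle\gamma^{(j)}(s),\eta\rangle|\lesssim_{A,B,d}\rho^{N-j}|\eta|$ for $1\le j\le N-1$, while $|\langle\gamma^{(N)}(s),\eta\rangle|\lesssim_B|\eta|$ is immediate from $\gamma\in\modelcurves{B,N}$. Taylor expanding
\begin{align*}
\gamma^{(i)}(s_0) = \sum_{j=i}^{N}\frac{(s_0-s)^{j-i}}{(j-i)!}\gamma^{(j)}(s) + O_B(|s_0-s|^{N+1-i})
\end{align*}
and taking inner products with $\eta$, each summand contributes at most $\rho^{j-i}\cdot\rho^{N-j}|\eta| = \rho^{N-i}|\eta|$, and the remainder contributes at most $\rho^{N+1-i}|\eta|$, yielding the claim.

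The main obstacle will be the dual-basis reconstruction step, passing from the pointwise bounds $|\langle\xi,w\rangle|\lesssim\rho^N|\eta|$ for $w\in\mathcal{B}_0$ to $|\xi|\lesssim\rho^N|\eta|$. This reduces to inverting the Gram matrix of $\mathcal{B}_0$, whose conditioning depends only on $B$ via \eqref{eq: class of curves}, independently of $\rho$, $\lambda$, or $n$.
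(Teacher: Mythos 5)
Your proposal is correct and takes essentially the same route as the paper: the upper bound from $\norm{T^{-1}} \lesssim_{B} \rho^{-N}$, and the lower bound by transferring inner products through the defining relations of $T$, bounding $|\inn{\gamma^{(i)}(s_{n,\nu})}{T^*\xi}| \lesssim_{A,B} \rho^{N-i}|T^*\xi|$ via Lemma~\ref{lem: inn prod estimaTes} for $i \leq N-1$ and the trivial bound for $i = N$, treating $(V^N_{s_{n,\nu}})^{\perp}$ directly, and recovering $|\xi|$ from the $B$-nondegenerate frame. Your Taylor-expansion step transferring the estimates from a point $s \in \spsupp{s}{a^{n,\nu}}$ to the centre $s_{n,\nu}$, and the explicit dual-basis/Gram-matrix reconstruction, simply spell out steps the paper applies implicitly.
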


\begin{proof}
Fix $1 \leq i \leq N$. From the definition of $T$, we have
    \begin{align}\label{eq: eq5}
        \inn{\gamma^{(i)}(s_{n,\nu})}{\xi} = \rho^{i}\inn{T^{-1} \gamma^{(i)}(s_{n,\nu})}{\xi} = \rho^{i}\inn{\gamma^{(i)}(s_{n,\nu})}{T^* \xi}.
    \end{align}
Fix  $\xi \in \spsupp{\xi}{\tilde{a}}$ so that, by the definition of the rescaled symbol,  $T^* \, \xi \in \spsupp{\xi} a^{n,\nu}$. Using Lemma~\ref{lem: inn prod estimaTes} when $i \leq N-1$ and the Cauchy--Schwarz inequality (or \eqref{eq: large Lth inn prod}) when $i = N$, we obtain
\[\big|\inn{\gamma^{(i)}(s_{n,\nu})}{T^* \xi}\big| \lesssim_{A,B} \rho^{N-i} \big|T^* \xi\big| \quad \textrm{for $1 \leq i \leq N$}.\] Combining this with \eqref{eq: eq5}, we deduce that 
\begin{equation}\label{eq: eq6}
    |\inn{\gamma^{(i)}(s_{n,\nu})}{\xi}| \lesssim_{A,B} \rho^{N}|T^* \, \xi|.
\end{equation}
On the other hand, if $ v \in (V^{N}_{s_{n,\nu}})^{\perp}$ is a unit vector, one can argue as in \eqref{eq: eq5} to have
\begin{equation}\label{eq: eq7}
    |\inn{v}{\xi}| = |\rho^N\inn{v}{T^* \xi}| \leq |T^* \xi| 
\end{equation}
where the fact $\rho < 1$ has been used.
Combining \eqref{eq: eq6}, \eqref{eq: eq7} and \eqref{eq: class of curves}, we obtain the lower bound in \eqref{eq:T acts on resc symb}. The upper bound follows from \eqref{norm_of_rescaling_map}.
\end{proof}
The following lemma now verifies how rescaling improves the type condition of the symbol. 
\begin{lemma}\label{lem: rescaled to type N-1}
 The rescaled symbol $\tilde{a}$ is of type $(\rho^{N}\lambda, A_1, N-1)$ with respect to $\widetilde{\gamma}$, where both $A_1$ depends only on $A, B$ and $N$. 
\end{lemma}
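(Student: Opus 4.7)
The plan is to verify each of the three conditions in Definition~\ref{def: symb_type_L} for the rescaled pair $(\tilde a,\widetilde\gamma)$ at scale $\rho^N\lambda$ with $L=N-1$. Lemma~\ref{lem: curve rescaled to type N-1} has already placed $\widetilde\gamma$ in $\modelcurves{B_1,N-1}$, so the curve side is in hand, and the task reduces to analysing the symbol $\tilde a(\xi,s,t)=a^{n,\nu}(T^{\ast}\xi,\,s_{n,\nu}+\rho s,\,t)$. For the frequency support condition (i), I would observe that $\xi\in\spsupp{\xi}{\tilde a}$ forces $T^{\ast}\xi\in\spsupp{\xi}{a^{n,\nu}}\subseteq\spsupp{\xi}{a}$, so $|T^{\ast}\xi|\sim\lambda$; the two-sided bound $|T^{\ast}\xi|\sim_{A,B}\rho^{-N}|\xi|$ from Lemma~\ref{lem: rescaling properties} then converts this into $|\xi|\sim\rho^N\lambda$, which is exactly (i) at the new scale after a suitable choice of constant.

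For the nondegeneracy condition (iii), the key input is the transpose identity
\[
\langle\widetilde\gamma^{(i)}(s),\xi\rangle=\rho^{i}\langle\gamma^{(i)}(s_{n,\nu}+\rho s),\,T^{\ast}\xi\rangle,
\]
which follows from $\widetilde\gamma^{(i)}(s)=\rho^{i}T^{-1}\gamma^{(i)}(s_{n,\nu}+\rho s)$ together with $T^{\ast}=T^{-\top}$. Summing over $1\le i\le N-1$ and invoking Lemma~\ref{lem: inn prod estimaTes} at the point $(T^{\ast}\xi,\,s_{n,\nu}+\rho s)\in\spsupp{\xi,s}{a^{n,\nu}}$ yields
\[
\sum_{i=1}^{N-1}|\langle\widetilde\gamma^{(i)}(s),\xi\rangle|=\rho^{N}\sum_{i=1}^{N-1}\rho^{i-N}|\langle\gamma^{(i)}(s_{n,\nu}+\rho s),T^{\ast}\xi\rangle|\sim_{A,B,d}\rho^{N}|T^{\ast}\xi|,
\]
and a final application of Lemma~\ref{lem: rescaling properties} replaces $\rho^{N}|T^{\ast}\xi|$ by a quantity comparable with $|\xi|$. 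This yields the required two-sided bound with a new constant $A_1$ depending only on $A$, $B$ and $N$.

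I expect condition (ii) to be the main obstacle, since the $s$-dependence has to be tracked simultaneously through the $G$-cutoff and through the rescaling. The chain rule gives $\partial_s^\beta\tilde a(\xi,s,t)=\rho^\beta(\partial_{s'}^\beta a^{n,\nu})(T^{\ast}\xi,\,s_{n,\nu}+\rho s,\,t)$, so it suffices to prove $|\partial_{s'}^\beta a^{n,\nu}|\lesssim_\beta\rho^{-\beta}$ on its support for $0\le\beta\le 3d$. I would factorise $a^{n,\nu}$ into four pieces: the ambient symbol $a$ and the cutoff $H$, both of which have $O(1)$ bounds on all $s'$-derivatives; the localising factor $\zeta(\rho^{-1}(s'-s_{n,\nu}))$, which costs $\rho^{-1}$ per derivative; and the $G$-cutoff $\beta_1(\eone^{2}\rho^{-2}G(\xi,s'))$ (or $\eta_1$ in the $n=0$ case). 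The delicate factor is the $G$-cutoff: since $G$ depends on $s'$ only through $\ezero^{-2}|s'-\sigma(\xi)|^{2}$, and since $|s'-\sigma(\xi)|\lesssim\rho$ on $\spsupp{\xi,s'}{a^{n,\nu}}$, we have $|\partial_{s'}G|\lesssim\rho$ on support, $|\partial_{s'}^{2}G|\lesssim 1$, and higher $s'$-derivatives vanish; a Faà di Bruno expansion then produces $|\partial_{s'}^\beta[\beta_1(\eone^{2}\rho^{-2}G)]|\lesssim_\beta\rho^{-\beta}$. Multiplying the four contributions gives $|\partial_{s'}^\beta a^{n,\nu}|\lesssim_\beta\rho^{-\beta}$, and the $\rho^\beta$ gained from the chain rule compensates precisely, yielding the $O(1)$ bound required by Definition~\ref{def: symb_type_L}~(ii).
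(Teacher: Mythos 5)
Your proposal is correct and follows essentially the same route as the paper: support via Lemma~\ref{lem: rescaling properties}, nondegeneracy via the transpose identity $\langle\widetilde\gamma^{(i)}(s),\xi\rangle=\rho^{i}\langle\gamma^{(i)}(s_{n,\nu}+\rho s),T^{\ast}\xi\rangle$ combined with Lemma~\ref{lem: inn prod estimaTes}, and the derivative bounds from the $\rho^{\beta}$ gain of the chain rule against the $\rho^{-\beta}$ cost of the cutoffs. Your treatment of condition (ii) is in fact more detailed than the paper's one-line justification, and your accounting of the $G$-cutoff (using $|s-\sigma(\xi)|\lesssim\rho$ on the support) is the correct way to fill in that step.
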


\begin{proof}
By Lemma~\ref{lem: rescaling properties}, it is clear that \[ \spsupp{\xi}{\tilde{a}} \subseteq \{\xi \in \R^d: |\xi| \sim_{A,B}  \rho^{N}\lambda \}.\] 
Since $0 < \rho < 1$ by \eqref{eq: small_rho}, the estimates $|\partial_{s}^{\beta}\tilde{a}(\xi,s,t)| \lesssim_{\beta} 1$ for $(\xi,s,t) \in \supp{\tilde{a}}$ follows from similar derivative estimates for $a$. Thus, it remains to verify that \eqref{eq: nondegassum} holds for the rescaled setup for $L = N-1$, $\gamma = \widetilde{\gamma}$ and $a = \tilde{a}$. Explicitly, we wish to show 
\[
\sum_{i = 1}^{N-1} |\inn{\widetilde{\gamma}^{(i)}(s)}{\xi}| \sim_{A,B} |\xi| \quad \textrm{for all } (\xi,s) \in \spsupp{\xi,s}{\tilde{a}}.\]
To this end, we recall from Lemma~\ref{lem: inn prod estimaTes} that
\begin{align}\label{eq: unscaled N-1-deg}
    \sum_{i = 1}^{N-1} \rho^{i - N}|\inn{\gamma^{(i)}(s)}{\xi}| \sim_{A,B} \lambda \quad \textrm{ for all }(\xi,s) \in \spsupp{\xi,s}{a^{n,\nu}}.
\end{align}    
However, by unwinding the definition,
\begin{align*}
     \langle\widetilde{\gamma}^{(i)}(s),\xi \rangle = 
      \rho^{i} \langle T^{-1}\gamma^{(i)}(s_{n,\nu} + \rho s),\xi \rangle  = \rho^{i} \langle \gamma^{(i)}(s_{n,\nu} + \rho s), T^*\,\xi \rangle.
\end{align*}
Thus, by \eqref{eq: unscaled N-1-deg} and Lemma~\ref{lem: rescaling properties}, 
\begin{align*}
    \sum_{i = 1}^{N-1} |\langle\widetilde{\gamma}^{(i)}(s),\xi \rangle| \sim_{A,B} \rho^{N}|T^*\,\xi| \sim |\xi| \quad \textrm{for all } (\xi,s) \in \spsupp{\xi,s}{\tilde{a}},
\end{align*}
which is the required estimate.
\end{proof} 
\subsection{Proof of Lemma~\ref{lem: n > 0}}
With all the available tools, the operator estimate for $\fravg{a^{n}}{\gamma}$ for $n \geq 1$ now follows easily.
\begin{proof}[Proof of Lemma~\ref{lem: n > 0}]
Fix $n \geq 1$. Temporarily fix $\nu \in \Z$. In view of \eqref{eq: s-rescaling} and \eqref{eq: rescaled operator} for $a = a^{n,\nu}$, we have 
\begin{align}\label{eq: summing_up_rescaling}
    \norm{\fravg{a^{n,\nu}}{\gamma}g}_{L^2(\R^{d+1})} \lesssim \norm{\fravg{\tilde{a}}{\widetilde{\gamma}}\tilde{g}}_{L^2(\R^{d+1})}.
\end{align}
Suppose $\tilde{\zeta} \in C_c^{\infty}(\R)$ is chosen such that $\supp{\tilde{\zeta}} \subseteq [-4,4]$, $\tilde{\zeta}(r) = 1$ when $|r| \leq 3$ and 
\[\sum_{\nu \in \Z} \tilde{\zeta}(\, \cdot \, - \nu) \lesssim 1. \]
In view of the support properties of $a^{n,\nu}$ (in particular, \eqref{eq: def a_n} and \eqref{eq: defining local_symb}), we have 
\[\tilde{\zeta}(\ezero^{-1}\eone\rho^{-1}(\sigma(T^{\ast}\xi) - s_{n,\nu})) = 1 \qquad \textrm{for $\xi \in \spsupp{\xi}{\tilde{a}}$.}\]
Consequently, recalling the integral expression \eqref{eq: oscill_exp}, it is clear that one can replace $\tilde{g}$ with $\tilde{g}^{n,\nu}$ in \eqref{eq: summing_up_rescaling} where 
\[\tilde{g}^{n,\nu} := \tilde{\zeta}\left(\ezero^{-1}\eone\rho^{-1}(\sigma \circ T^{\ast}(\tfrac{1}{i}\partial_{x}) - s_{n,\nu})\right) \tilde{g}. \]
Now, Lemma~\ref{lem: curve rescaled to type N-1} and Lemma~\ref{lem: rescaled to type N-1} ensure that the rescaled pair ($\tilde{a}, \tilde{\gamma}$)  satisfy the assumptions of Proposition~\ref{prop: main_L} with $L = N-1$ (note that \eqref{eq: small_rho} ensures that $\rho$ is of the right size, so Lemma~\ref{lem: curve rescaled to type N-1} applies). Thus, the statement of the proposition applies and we obtain
\begin{align}
    \norm{\fravg{\tilde{a}}{\widetilde{\gamma}}\tilde{g}^{n,\nu}}_{L^2(\R^{d+1})} &
    \lesssim_{A,B,d} (\log \rho^{N}\lambda)^{{(N-2)}/{2}} \norm{\tilde{g}^{n,\nu}}_{L^2(\R^{d+1})} \notag\\
    & \lesssim_{A,B,d} (\log \lambda)^{{(N-2)}/{2}} \norm{\tilde{g}^{n,\nu}}_{L^2(\R^{d+1})}. \label{eq: norm_est_fixed_n,nu}
\end{align}
Thus, the proof of Lemma~\ref{lem: n > 0} reduces to summing the above estimates in $\nu$ without further loss in $\lambda$. Using \eqref{eq: further_decomp}, Plancherel's theorem and the support properties of symbols $a^{n,\nu}$, we combine \eqref{eq: norm_est_fixed_n,nu} for different values of $\nu$ to deduce that
\begin{align*}
     \norm{\fravg{a^{n}}{\gamma}g}_{L^2(\R^{d+1})}^{2} & \lesssim_{d}  \sum_{\nu \in \Z} \norm{\fravg{a^{n,\nu}}{\gamma}g}_{L^2(\R^{d+1})} ^2 \notag \\
     &  \lesssim_{A,B,d} (\log \lambda)^{{N-2}} \sum_{\nu \in \Z} \norm{\tilde{g}^{n,\nu}}_{L^2(\R^{d+1})}^2.
\end{align*}
After a change of variable, it is evident that 
$\norm{\tilde{g}^{n,\nu}}_{L^2(\R^{d+1})} =  \norm{g^{n,\nu}}_{L^2(\R^{d+1})}$, where
\[g^{n,\nu} := \tilde{\zeta}\left(\ezero^{-1}\eone\rho^{-1}(\sigma (\tfrac{1}{i}\partial_{x}) - s_{n,\nu})\right) g. \]
Thus, by another application of the Plancharel's theorem,
\begin{align*} 
     \norm{\fravg{a^{n}}{\gamma}g}_{L^2(\R^{d+1})}^{2} & \lesssim_{A,B,d} (\log \lambda)^{{N-2}}
\sum_{\nu \in \Z}\norm{g^{n,\nu}}_{L^2(\R^{d+1})}^2  \\
& \lesssim_{A,B,d} (\log \lambda)^{{N-2}} \norm{g}_{L^2(\R^{d+1})}^2
\end{align*}
concluding the proof.
 
\end{proof}
In the next section, we discuss the sharpness of the main theorem.
\section{Sharpness of the Theorem~\ref{thm: main}}\label{sec: sharpness of main thm}
By acting the maximal function on standard test functions, here we discuss the sharpness of Theorem~\ref{thm: main} in two directions: sharpness of the range of $p$ and the dependence of the operator norm on $\log \delta^{-1}$.
\subsection{Sharpness of the range of \textit{p}}
    Fix $p \in [1,\infty)$ and assume that given any $\epsilon > 0$, we have
    \begin{equation}\label{eq: exampleeq1}
        \norm{\maxfn}_{L^p(\R^{d+1}) \rightarrow L^p(\R^{d})} \lesssim_{\epsilon} \delta^{-\epsilon} \qquad \textrm{for all $0 < \delta < 1$.}
    \end{equation} 
    Temporarily fix $\epsilon$ and $\delta$. Set $g_{\delta} := \chi_{B(0,\delta)}$. It is easy to show that the $\delta$-neighbourhood of the curve $-\gamma$ is a subset of the super-level set 
    \[\{ x \in \R^d : |\maxfn g_{\delta}(x)| \gtrsim \delta \}.\]
    Applying Chebyshev's inequality and using \eqref{eq: exampleeq1}, we have
    \[ \delta \delta^{(d-1)/p} \lesssim_{\epsilon} \delta^{(d+1)/p - \epsilon}.\]
    Letting $\delta \rightarrow 0$, we see that $p \geq 2 - \epsilon$. Letting $\epsilon \rightarrow 0$, we conclude that $p \geq 2$. Thus, $L^p$ operator norm of $\maxfn$ has polynomial blowup in $\delta^{-1}$ for $p \in [1,2)$.

\subsection{ Sharpness of the operator norm} Fix $0 < \delta < 1$ and $p \in [2,\infty)$. Consider the vectors $\boldsymbol{w} = (x,0), \ \boldsymbol{z} = (y,0)  \in \R^{d+1}$. It follows from the definition that 
    \[ \boldsymbol{w} + T_{\delta}(r) \cap \boldsymbol{z} + T_{\delta}(s) \neq \emptyset \] if and only if  there exist a $t \in [-1,1]$ such that
    \begin{align}\label{eq: tube_intersection}
        (x - y) + t(\gamma(r) - \gamma(s)) = O(\delta).
    \end{align}
Assuming $|\gamma(s)| \sim 1$ for all $s \in [-1,1]$, it is also not hard to see that 
\begin{align}\label{eq: tube_vol_estimates}
     \mathrm{Vol}_{\R^{d+1}}(\boldsymbol{w} + T_{10\delta}(r) \cap \boldsymbol{z} + T_{\delta}(s)) \sim \frac{\delta^{d+1}}{\delta + |\gamma(r) - \gamma(s)|}
\end{align}
whenever \eqref{eq: tube_intersection} holds.

    Fixing $(x,r) \in \R^d \times I$, set $f_{\delta} := \chi_{ \boldsymbol{w} + T_{10\delta}(r)}$ and note that $\norm{f_{\delta}}_{L^p(\R^{d+1})} \sim \delta^{d/p}$. Fix $0 \leq k \leq \lfloor \log(\delta^{-1}) \rfloor$, define 
\[A_{k} := \{ y \in \R^d : |\maxfn f_{\delta}(y)| \sim 2^{-k} \}.\]
We claim that 
\[|A_k| \gtrsim 2^{2k}\delta^{d}.\]
    Indeed, in view of  \eqref{eq: tube_vol_estimates}, $A_{k}$ contains all points $y \in \R^d$ for which there exist $s,t \in  [-1,1]$ such that \eqref{eq: tube_intersection} holds and $|\gamma(s) - \gamma(r)| \sim 2^{k}\delta$. The latter condition ensures that the admissible directions $\gamma(s)$ belong to a portion of the curve which is contained inside a ball of radius $ \sim 2^{k}\delta$. Moreover, for a fixed direction $\gamma(s)$, any $y \in \R^d$ that lies in the $\delta$-neighbourhood of the tube $x + \{ t(\gamma(r) - \gamma(s)): t \in [-1,1] \}$ satisfies \eqref{eq: tube_intersection}. Therefore, $A_k$ contains the $\delta$-neighbourhood of a two-dimensional cone in $\R^d$ of diameter $ \sim 2^k\delta$, justifying our claim. Thus,
\begin{align*}
    (\log \delta^{-1}) \delta^{d} \lesssim     \sum_{k = 0}^{\lfloor \log(\delta^{-1}) \rfloor} 2^{-2k}|A_k| \leq \norm{\maxfn}_{L^p(\R^{d+1}) \rightarrow L^p(\R^{d})}^p \norm{f_{\delta}}^p_{L^p(\R^{d+1})}.
\end{align*}
Consequently, we see that \[\norm{ \maxfn }_{L^p(\R^{d+1}) \rightarrow L^p(\R^{d})} \gtrsim (\log \delta^{-1})^{1/p}.\]
In view of the above, we may conjecture that $(\log \delta^{-1})^{{1}/{p}}$ is the sharp $L^p$ operator norm of $\maxfn$ for $p \in [2,\infty)$.
In other words, it is possible that Theorem~\ref{thm: main} gives only a partial result in this direction. \\

In the next section, we discuss a generalisation of Theorem~\ref{thm: main}.
\section{Further extensions}\label{sec: further ext}
As observed in \cite{bghs-lp}, a stronger version of Theorem~\ref{thm: main} which deals with \textit{families of anisotropic tubes} is used in actual applications to the proofs of certain geometric maximal estimates (such as that of the helical maximal function). In this section, we state the anisotropic extension of Theorem~\ref{thm: main} with a brief discussion of how the argument presented in the article can be modified to work for the more general setup.

We begin by introducing the anisotropic tubes using the Frenet frame co-ordinate system. For $s \in I$, let $\{e_1(s),\dots,e_d(s)\}$ denote the collection of Frenet frame basis vectors, formed by applying Gram--Schmidt process to the set $\{\gamma^{(1)}(s), \dots, \gamma^{(d)}(s)\}$. For $\vecrr = (r_1,\dots,r_d) \in (0,1)^{d}$, we consider a tube in $\R^{d+1}$ in the direction of $\gamma(s)$, defined as 
\begin{align}\label{eq: def_anisotropic_tubes}
    {T}_{\mathbf{r}}(s) := \big\{(y,t) \in \R^{d} \times I : |\inn{y - t\gamma(s)}{e_j(s)}| \leq r_j \text{ for } \ 1 \leq j \leq d \big\}.
\end{align}
As before, we can introduce the corresponding averaging and maximal operator as 
\begin{align*}
    \mathcal{A}_{\vecrr}^{\gamma} g(x,s) & : = \frac{1}{|T_{\vecrr}(s)|}\int_{{T}_{\vecrr}(s)} g(x - y,t) \dy \dt \qquad \text{ for } (x,s) \in \R^d \times I
\end{align*}
and 
\begin{align*}
 \mathcal{N}_{\vecrr}^{\gamma}g(x) & : = \sup_{s \in I} |\mathcal{A}_{\vecrr}^{\gamma}g(x,s)| \qquad \text{ for } x \in \R^d
\end{align*}
whenever $g \in L^1_{\mathrm{loc}}(\R^{d+1})$. 

By modifying the argument presented in \S\ref{sec: initial reduct} and \S\ref{sec: proof of Main prop}, the $L^p$ boundedness problem for $\mathcal{N}_{\vecrr}^{\gamma}$ can be resolved under mild hypothesis on $\vecrr$ . Our result \cite{ags2023} is as follows.
\begin{theorem}\label{thm: aniso}
Let $\vecrr = (r_1,\dots,r_d) \in (0,1)^{d}$ be chosen such that
\begin{equation}\label{eq: admissible_r}
      r_{d} \leq \cdots \leq r_{1} \leq r_2^{1/2} \qquad \textrm{and} \qquad r_{j} \leq r_i^{{(k-j)}/{(k-i)}} r_k^{{(j-i)}/{(k-i)}} \quad 
 \end{equation}
for any $1 \leq i \leq j \leq k \leq d$ hold. Then, there exists $C_{d,\gamma} > 1$ such that 
    \begin{align*} 
\norm{\mathcal{N}_{\vecrr}^{\gamma}}_{L^2(\R^{d+1}) \rightarrow L^2(\R^{d})} \leq C_{d,\gamma} (\log r_d^{-1})^{d/2}.
    \end{align*}
\end{theorem}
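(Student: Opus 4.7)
The plan would be to follow the architecture of the isotropic proof, replacing the single scale $\delta$ by the tuple $\vecrr$ adapted to the Frenet frame. First, I would replace the isotropic frequency cut-off $\psi(\delta|\xi|)$ appearing in \eqref{eq: symb_defn} by an anisotropic one of the form $\prod_{j=1}^{d}\psi(r_j \inn{\xi}{e_j(s)})$, and check that the fractional Sobolev embedding argument of Proposition~\ref{prop: Sobolev embedding} goes through. Since the largest frequency along any Frenet direction is $r_d^{-1}$, integration-by-parts in the phase $\sigma s + t\inn{\gamma(s)}{\xi}$ on $\spsupp{\xi,s}{b_{\vecrr}}$ still yields the required rapid decay, and one recovers the same reduction with $|\log \delta|^{1/2}$ replaced by $|\log r_d|^{1/2}$.

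The heart of the argument would be an anisotropic analogue of Proposition~\ref{prop: main_L}. Given $\gamma \in \modelcurves{B,L}$ and a scale vector $\vec{\lambda} = (\lambda_1, \dots, \lambda_d)$ with $\lambda_j \sim r_j^{-1}$, one considers symbols whose frequency support satisfies $|\inn{\xi}{e_j(s)}| \sim \lambda_j$ for each $j$, together with an appropriate non-degeneracy condition generalising \eqref{eq: nondegassum}: roughly, the weighted sum $\sum_{i=1}^{L} \lambda_i^{-1}|\inn{\gamma^{(i)}(s)}{\xi}|$ should be $\sim 1$ on the support. The target estimate is $\norm{\fravg{a}{\gamma}}_{L^2 \to L^2} \lesssim_{A,B,d} (\log \lambda_L)^{(L-1)/2}$. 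The base case $L=1$ is again handled by integration-by-parts in \eqref{eq: kernel expression}, since $|\inn{\gamma'(s)}{\xi}| \sim \lambda_1$ on the support, which gives the required Schur kernel estimate.

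For the inductive step, one would follow the decompositions of \S\ref{subsec:initial decomp} and \S\ref{subsec: further decomp}, redefining the level-set function $G$ so that each term $|\inn{\gamma^{(i)} \circ \sigma(\xi)}{\xi}|^{\cexp/(N-i)}$ is normalised against the anisotropic scale $\lambda_i$ rather than the single scale $\lambda$. The rescaling map $T^{N}_{s_0,\rho}$ of \S\ref{subsec: rescaling} must then act anisotropically, compressing each $\gamma^{(i)}(s_0)$ by a factor determined jointly by $\rho$ and the ratio $\lambda_i/\lambda_N$ rather than by $\rho^i$ alone. The admissibility conditions \eqref{eq: admissible_r} are precisely what ensures that the rescaled pair $(\widetilde{\gamma}, \widetilde{a})$ continues to fit this framework, producing a new symbol of type $(\vec{\widetilde{\lambda}}, A_1, L-1)$ whose scales still satisfy the relations in \eqref{eq: admissible_r}.

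The main obstacle is verifying this scale-preservation through the rescaling argument, which is where \eqref{eq: admissible_r} really earns its keep. The log-concavity $r_j \leq r_i^{(k-j)/(k-i)} r_k^{(j-i)/(k-i)}$ is precisely the condition ensuring that the rescaled scales at step $L-1$ remain admissible, so that the induction hypothesis can be applied; the additional condition $r_1 \leq r_2^{1/2}$ is needed to exclude degenerate regimes that would otherwise spoil the base-case estimate after one rescaling. After summing the $L^2$ estimates on the further decomposed pieces $a^{n,\nu}$ (cf.\ \eqref{eq: reducing to a_n} and \eqref{eq: summing_up_rescaling}) and invoking the induction hypothesis at $L=d-1$, one would obtain the anisotropic main estimate, which combined with the Sobolev reduction yields the bound $(\log r_d^{-1})^{d/2}$ of Theorem~\ref{thm: aniso}.
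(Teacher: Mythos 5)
Your overall architecture (a Frenet-adapted anisotropic symbol, the Sobolev embedding reduction, induction on $L$ with the decomposition-plus-rescaling scheme) matches the paper's outline, but you have missed what the paper singles out as the central new difficulty, and as a result a key step of your sketch fails as written. In the isotropic argument the uniform derivative bounds $|\partial_{s}^{\beta}a|\lesssim_{\beta}1$ of Definition~\ref{def: symb_type_L}~ii) are used at every stage where the operator is estimated directly: the integration-by-parts in the base case (Lemma~\ref{lem: base case}) and the kernel bounds via \eqref{eq: kernel expression} in the $n=0$ case (Lemma~\ref{lem: n = 0 case}) all need the amplitude's $s$-derivatives to be $O(1)$. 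For the anisotropic symbol $\arr(\xi,s,t)=\prod_{j}\psi(r_{j}\inn{\xi}{e_{j}(s)})\tilde{\chi}_{I}(s)\tilde{\chi}_{I}(t)$ this fails badly: by the Frenet equations $e_{j}'(s)$ has components along $e_{j\pm1}(s)$, so differentiating the factor $\psi(r_{j}\inn{\xi}{e_{j}(s)})$ in $s$ produces terms of size $\sim r_{j}|\inn{\xi}{e_{j\pm1}(s)}|\lesssim r_{j}r_{j+1}^{-1}$, and iterating gives the bound $\max\prod_{i}r_{j_{i}}r_{k_{i}}^{-1}$ recorded in the paper, which can be enormous. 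Hence your assertion that the base case ``is again handled by integration-by-parts'' does not go through for the symbol class you propose: nothing in your induction hypothesis controls $\partial_{s}^{\beta}a$, and this is precisely what must be built in. The paper's discussion makes this the main modification --- the type condition must carry pointwise derivative bounds ``sensitive to the decompositions'' --- and it identifies \eqref{eq: admissible_r} as the minimal hypotheses under which those derivative bounds can be established at the direct-estimation stages, not (as you claim) primarily as a closure condition for scale bookkeeping under rescaling.

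A secondary divergence: the paper states that the decompositions of \S\ref{subsec:initial decomp}, \S\ref{subsec: further decomp} and the rescaling of \S\ref{subsec: rescaling}--\S\ref{subsec: rescaling op} ``remain intact,'' i.e.\ the same $\rho^{i}$-parabolic rescaling is used and the anisotropy is absorbed into the modified symbol hypotheses; you instead propose altering the rescaling factors via the ratios $\lambda_{i}/\lambda_{N}$ and imposing an $s$-dependent dyadic localisation $|\inn{\xi}{e_{j}(s)}|\sim\lambda_{j}$ in every Frenet direction. That is a genuinely different design whose summation/orthogonality step is nontrivial (such a localisation is $s$-dependent and does not interact simply with Fourier multipliers in $x$ applied to $g$ alone), and you give no argument that the induction hypotheses are reproduced after rescaling. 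Since the paper itself only sketches this theorem and defers details to a companion work, a credible blind proposal should at minimum engage with the derivative-loss obstacle the paper explicitly flags; yours does not.
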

There are two most interesting cases where Theorem~\ref{thm: aniso} can be applied. These are when $\vecrr = \vecrr_{\rt{iso}} := (\delta,\dots, \delta)$ and $\vecrr = (\delta, \delta^{2},\dots, \delta^{d})$ for $0 < \delta < 1$. In both cases, it is clear that $\vecrr$ satisfies \eqref{eq: admissible_r}. By applying Theorem~\ref{thm: aniso} in the first case, we recover Theorem~\ref{thm: main} as a consequence. 
\begin{proof}[Discussion on the proof of Theorem~\ref{thm: aniso}]
In the following discussion, we only emphasize the major changes from the arguments presented in this article. A detailed proof can be found in \cite{ags2023}.

We begin by recalling the definition 
\begin{align*}
 \ar (\xi,s,t) := \psi(\delta|\xi|) \tilde{\chi}_{I}(s)\tilde{\chi}_{I}(t) \qquad \textrm{ for $(\xi,s,t) \in \R^{d} \times I \times I$}
\end{align*}
from \eqref{eq: symb_defn}. In view of \eqref{eq: def_anisotropic_tubes}, the symbol equivalent of $a_{\vecr}$ has a different form in the anisotropic setup. We define
\begin{align*}
 \arr (\xi,s,t) := \prod_{j = 1}^{d} \psi( \inn{\xi}{e_j(s)} r_j) \tilde{\chi}_{I}(s)\tilde{\chi}_{I}(t) \qquad \textrm{ for $(\xi,s,t) \in \R^{d} \times I \times I$.}
\end{align*}
Note that when $\vecrr = \vecrr_{\rt{iso}}$, we essentially recover $\ar$ from this definition.

By arguing as in \S\ref{sec: initial reduct}, we can reduce the proof of Theorem~\ref{thm: aniso} to establishing operator norm estimates for the Fourier integral operator $\fravg{\arr}{\gamma}$. In particular, it suffices to show that 
\begin{align}\label{eq: aniso_fio_est}
    \norm{\fravg{\arr}{\gamma}}_{L^2(\R^{d+1}) \rightarrow L^2(\R^{d+1})} \lesssim_{d,\gamma} (\log r_d^{-1})^{d-1}.
\end{align}
In \S\ref{sec: initial reduct}, we obtained an equivalent version of \eqref{eq: aniso_fio_est} for $\vecrr = \vecrr_{\rt{iso}}$ by first dyadically decomposing the operator and then applying Proposition~\ref{prop: main} to each part. The proposition, in turn, was proved using an induction argument (in particular, Proposition~\ref{prop: main_L}). Similarly, we can reduce the proof of \eqref{eq: aniso_fio_est} to a modified form of Proposition~\ref{prop: main_L}. The modifications lie in the analysis of the derivative estimates for the underlying symbol. The core argument, involving the decomposition as described in \S\ref{subsec:initial decomp}, \S\ref{subsec: further decomp} and the rescaling as described in \S\ref{subsec: rescaling}, \S\ref{subsec: rescaling op} remain intact. We will now briefly outline what the modifications are and the motivations behind them.

Recall that the derivative bounds
\begin{align}\label{eq: derivative estimates discussion}
  |\partial_{s}^{\beta}\ar (\xi,s,t)| \lesssim_{\beta,\gamma,d} 1 \qquad \textrm{ for $\beta \in \N$ and $(\xi,s,t) \in \supp{\ar}$}
\end{align}
were explicitly used for directly estimating parts of the operator at many stages in the proof of Proposition~\ref{prop: main_L} (in particular, see the proofs of Lemma~\ref{lem: base case} and Lemma~\ref{lem: n = 0 case}). 
Consequently, the operator norm of $\fravg{a_{\vecr}}{\gamma}$ depend on the upper bound in \eqref{eq: derivative estimates discussion}.
Note that the rescaling did not interfere with \eqref{eq: derivative estimates discussion} and therefore, one was able to carry these estimates unchanged throughout the induction process (see Definition~\ref{def: symb_type_L}).

The situation is significantly different in the anisotropic setup. In contrast to \eqref{eq: derivative estimates discussion}, the best attainable $L^{\infty}$ bounds for the derivatives of the anisotropic symbol are
\begin{align*}
  \norm{\partial_{s}^{\beta}\arr}_{L^{\infty}(\R^d \times I \times I)} \lesssim_{\beta,\gamma,d} \max_{1 \leq j_1,\dots,j_\beta,k_1,\dots, k_{\beta} \leq d} \ \prod_{i = 1}^{\beta} r_{j_i} r_{k_i}^{-1} \quad \textrm{ for $\beta \in \N$.}
\end{align*}
Note that the expression on the right depends on $\vecrr$ and can be extremely large. However, after applying the decomposition as described in \S\ref{subsec:initial decomp} and \S\ref{subsec: further decomp}, it is possible to obtain improved $L^{\infty}$ bounds for the derivatives of each part of the symbol. This suggests that in contrast to Definition~\ref{def: symb_type_L} ii), the induction assumption in the anisotropic setup should include pointwise bounds for the derivatives of the symbol \textit{expressed in a form that is sensitive to the many decomposition in the argument}.

Finer control over the derivatives of the symbol at each step of the induction as mentioned above, is insufficient on its own for the purpose of establishing acceptable bounds at stages where we directly estimate the operator. The assumptions \eqref{eq: admissible_r} are the minimal conditions required such that, coupled with the modified induction assumptions and additional properties of the decomposition, they can be used to satisfactorily estimate the derivatives of the symbol at these stages.

By introducing the above-mentioned changes in the assumptions about the underlying symbol and taking care of many additional technical details (in particular, we must also keep track of the constants we gain during each application of rescaling), we can prove a modified version of Proposition~\ref{prop: main_L}, completing the proof of Theorem~\ref{thm: aniso}.
\end{proof}

\bibliography{bibmine}{}
\bibliographystyle{plain}

\end{document}